\newcommand\myshade{85}
\colorlet{mylinkcolor}{violet}
\colorlet{mycitecolor}{YellowOrange}
\colorlet{myurlcolor}{RoyalBlue}
\newcommand\xqed[1]{%
  \leavevmode\unskip\penalty9999 \hbox{}\nobreak\hfill
  \quad\hbox{#1}}
\newcommand\demo{\xqed{$\triangle$}}
\theoremstyle{plain}
\newtheorem{theorem}{Theorem}[section]
\theoremstyle{definition}
\newtheorem{definition}{Definition}[section]
\theoremstyle{remark}
\newtheorem{remark}[theorem]{Remark}
\theoremstyle{remark}
\theoremstyle{plain}
\theoremstyle{plain}
\theoremstyle{plain}
\newtheorem{proposition}[theorem]{Proposition}
\theoremstyle{remark}
\newtheorem{example}{Example}[section]
\theoremstyle{remark}
\theoremstyle{remark}
\theoremstyle{remark}
\newcommand\RR{\mathbb{R}}
\newcommand\LL{\mathcal{L}}
\title{Weak energy shaping for stochastic controlled port--Hamiltonian systems}
\author{F. Cordoni, L. Di Persio and R. Muradore}
\date{ }      
\begin{document}

\maketitle   

\begin{abstract}
The present work address the problem of energy shaping for stochastic port--Hamiltonian system. Energy shaping is a powerful technique that allows to systematically find feedback law to shape the Hamiltonian of a controlled system so that, under a general passivity condition, it converges or tracks a desired configuration. Energy shaping has been recently generalized to consider stochastic port--Hamiltonian system. Nonetheless the resulting theory presents several limitation in the application so that relevant examples, such as the additive noise case, are immediately ruled out from the possible application of energy shaping. The current paper continues the investigation of the properties of a weak notion of passivity for a stochastic system and a consequent weak notion of convergence for the shaped system considered recently by the authors. Such weak notion of passivity is strictly related to the existence and uniqueness of an invariant measure for the system so that the theory developed has a purely probabilistic flavour. We will show how all the relevant results of energy shaping can be recover under the weak setting developed. We will also show how the weak passivity setting considered draw an insightful connection between stochastic port--Hamiltonian systems and infinite--dimensional port--Hamiltonian system.
\end{abstract}
\tableofcontents

\section{Introduction}\label{SEC:Intro}

In the last decades port--Hamiltonian systems (PHS) have seen a constantly growing interest. The theory of PHS merges two different points of view: (i) the theory of the port--based modelling and bond graphs, \cite{Bre1,Bre2,Dui}, aiming at providing a unified framework for physical systems belonging to different domains and (ii) Hamiltonian and geometric mechanics, \cite{Dal,Dal2}. Recently PHSs have been extensively used to tackle optimal control theory, \cite{OrtS,VdSC,Ort1}.

The main object in PHS theory is the \textit{Dirac structure}, that is a geometric object that describes the geometry of the system. \textit{Dirac structures} have been introduced in \cite{Cou} as a general geometric tool to treat degenerate symplectic structures in a unified way. Such objects allow to study and characterize the geometry of a wide variety of physical systems, that encompass pre-symplectic manifolds, Poisson dynamics and constrained systems, \cite{Dal,VdSC2,VdS10,Dal,Dal2}. The \textit{Dirac structure} defines an \textit{implicit Hamiltonian system}, leading to a definition of a Hamiltonian systems in terms of a set of algebraic--differential equations. Such a general description of a physical system allows to a systematic investigation of the interconnection, \cite{Mas,Dal2}, integrability, \cite{Dal} and symmetries, \cite{Bla}, and also to study physical systems with nonholonomic constraints, \cite{GB}. Recently, PHSs have been extended to the stochastic case, \cite{CDPMSPHS,CDPM_Tank,CDPM_IFAC,CDPM_Dis,Sat1,Sat2,Sat3,Sat4}.

Among the most relevant application of PHS is the usage of the geometric properties of interconnected systems to design suitable controls to achieve a precise goal, typically with the aim of stabilizing the overall system at a desidered configuration, or to track a desired trajectory, \cite{Ort1,OrtS}. In fact, many physical systems rest at a configuration in which their total energy function assumes a minimum. In case dissipation is present, such configuration is asymptotically stable. Rarely the minimum of the potential energy coincides with the desired configuration, so that the idea is to implement proper control actions able to shape the system energy in order to force a minimum in correspondence with the desired configuration.
This control technique is called \textit{energy shaping}, \cite{Ort1,Fan}. The stabilization of the system follows from the \textit{passivity} property together with the La Salle's invariance principle.

The generalization of \textit{energy shaping} techniques to the stochastic case is non trivial. Stabilization of stochastic passive systems has been first studied in \cite{Flo1,Flo2,Flo3,Sat2}. Energy shaping for stochastic PHS (SPHS) has been addressed in \cite{Had}, where standard results from deterministic energy shaping have been adapted by considering stochastic PHS. In \cite{Sat1,Sat3} different notions of stochastic stabilization are considered to include a broad range of possible physical examples. A different and yet related approach to stochastic energy shaping via \textit{Casimir functionals}, that conserved quantities of the system, is studied in \cite{Arn}. The authors addressed energy shaping of a class of stochastic Hamiltonian systems via the associated \textit{infinitesimal generator}. As noted by the authors, their approach is only valid for short time, whereas to look at the long--time behaviour the \textit{invariant measure} of the stochastic system must be considered. In \cite{Fang,CDPM_Tank,CDPM_IFAC} a \textit{weak notion of stochastic stability} is considered, showing that such notion is in turn strictly related to the \textit{invariant measure} of the SPHS. Broadly speaking, the weak notion of passivity introduced in \cite{Fang,CDPM_Tank,CDPM_IFAC} is not defined on the whole state--space but only outside a ball centred at a specified state. This definition has several desirable implications regarding the limiting distribution of the system. It turns out that this weak notion of passivity is tailor-made to deal with stochastic equations with additive noise, allowing to extend previous results to consider also the case of SPHS's with non vanishing noise. 

The present work extensively and systematically studies the notion of weak stochastic passivity used in \cite{CDPM_Tank} and the related convergence, with particular attention to the energy shaping of SPHS. We will show that the proposed approach generalizes the results in \cite{Had}, including relevant physical systems that do not fall in their assumption. In particular, results proved in \cite{Had}, although being a very interesting first step in generalizing energy shaping to a stochastic scenario, have a few practical as well as theoretical limitations. On one side, in order to design the control, \textit{strong Casimir} are considered. By strong Casimir, following the notation of \cite{CDPMSPHS,Ort}, we mean $\mathbb{P}-$a.s. conserved quantities. On the other side, given the notion of convergence used in \cite{Had}, in order to stabilize the system the noise must vanishes at the desired configuration. The usage of \textit{strong Casimir}, implies that the control must share the same noise as the physical system to be controlled. This is needed since, in order to obtain a Casimir for the system, the noise of the control must compensate $\mathbb{P}-$a.s. the noise of the system. Such condition, as shown in \cite{CDPMSPHS}, is hardly satisfied in real applications since it implies that it is possible to separate at any time the state of the system from the noise. Also, it is worth stressing that strong Casimir functional are rare and difficult to obtain. 

The assumption on the vanishing noise has also other restrains. In fact, the idea of the energy shaping approach for PHS is that a controller can in principle stabilize the system around any configuration designing a suitable control law in feedback form so that the resulting controlled PHS is again a PHS with a new Hamiltonian function having a minimum in the desired configuration. This fact, together with the passivity property of the PHS, implies that the system stabilizes around the minimum of the Hamiltonian. Since the fact that the control does not affect the noise of a SPHS, the randomness of a SPHS cannot be changed by any law. Therefore, since the notion of stochastic stability used in \cite{Had} requires a vanishing noise, it turns out that a system can be stabilized only around configurations for which the noise vanishes. Such assumption strongly limits the possible configurations around which a SPHS can be stabilized. Even more relevant, above assumption immediately rule out additive noise which is the standard case when real sensors are taken into account.

The approach proposed in the current paper solves all of the above problems. Using a weak notion of passivity we are able to consider a weak notion of convergence, which is strictly related to the invariant measure of a SPHS. Such notion, without any requirement on the vanishing noise, allows to include additive noise as well as to stabilize the system around any configuration. In the case of a vanishing noise we recover stabilization as proved in \cite{Had}. Further, control design can be done as in the deterministic case where now the stochastic system oscillates around the desired configuration with a magnitude given by the noise that affects the system. Also, using the notion of weak Casimir we are able to include a wider class of possible Casimir. In order to be as general as possible, we will prove the main results also for the	 relevant class of stochastic systems with degenerate noise.

We will also show how, as typical in stochastic analysis, the problem of finding an invariant measure for a SPHS can be solve looking at stationary solutions for a deterministic PDE, called the \textit{Fokker--Planck equation}, \cite{Lor,Bor_book}. We will show that such deterministic PDE can be proved to be an infinite dimensional PHS in a Lebesgue space weighted by the invariant measure. Such result has a major consequence. It provides a deep and interesting connection between weak energy shaping of SPHS and energy shaping for infinite--dimensional PHS. This allows to tackle the problem of energy shaping either from a deterministic or a stochastic point of view. Such connection is only introduced in the current work and it will be further studied in the future.

The main contributions of the present paper are:
\begin{description}
\item[(i)] to study a weak notion of stochastic passivity and stability for a wide class of SPHS;
\item[(ii)] to investigate the problem of energy shaping via the new proposed notion of stochastic passivity;
\item[(iii)] to generalize the energy shaping approaches available in the literature;
\item[(iv)] to show that the Fokker--Planck equation associated to a SPHS can be seen as an infinite--dimensional deterministic PHS. 
\end{description}

The structure of the paper is as follow: in Section \ref{SEC:SPSDE} we introduce the main notions of stochastic passivity and stability. In Section \ref{SEC:WeakPass} we introduced rigorously the weak notion of stochastic passivity, while in Section \ref{SEC:ConnInf} we prove the connection to infinite--dimensional PHS. Section \ref{SEC:EnSh} is devoted to studying energy shaping under the weak notion of stochastic passivity introduced; Section \ref{SEC:Example} shows two examples where explicit invariant measures for a SPHSs are calculated. 

\section{Stability and passivity for stochastic differential equations}\label{SEC:SPSDE}

Throughout the work we will consider a complete filtered probability space $\left (\Omega,\left (\mathcal{F}_t\right )_{t\geq 0},\mathbb{P}\right )$ satisfying usual assumptions. 

Before entering into details on energy shaping for stochastic port--Hamiltonian systems (SPHS), in order to make the paper as much self--contained as possible, some key results regarding the stability of a general \textit{stochastic differential equation} (SDE) are briefly recalled. Consider a stochastic process $\left (X(t)\right )_{t \geq 0} \in \RR^n$ satisfying the following SDE
\begin{equation}\label{EQN:SDE}
\begin{cases}
dX(t)=\mu(X(t))dt+\sigma(X(t))dW(t)\,,\\
X(s)=x\,,
\end{cases}
\end{equation}
where $\mu:\RR^n\to\RR^n$ and $\sigma:\RR^n \to \RR^{n \times d}$ are suitable regular enough coefficients, $W(t)$ is a $d-$dimensional standard Brownian motion and $dW(t)$ is the integration in It\^{o} sense. We will use the convention $X^{s,x}(t)$ to denote the solution of equation \eqref{EQN:SDE} at time $t$ starting at time $s<t$ with initial value $x$. If no confusion is possible we will write for short $X(t) = X^{s,x}(t)$.

Next we recall different possible notions of convergence for a stochastic process $X$, \cite{Flo3,Kha}.

\begin{definition}\label{DEF:Stab}
The equilibrium solution $X(t) \equiv 0$ is said:
\begin{description}
\item[(i)] \textit{stable in probability} if for any $s \geq 0$ and $\epsilon>0$ it holds
\[
\lim_{x\to 0} \mathbb{P}\left (\sup_{s \leq t} \left |X^{s,x}(t) \right | > \epsilon \right )=0\,;
\]

\item[(ii)] \textit{locally asymptotically stable in probability} if it is \textit{stable in probability} and for any $s \geq 0$ it holds
\[
\lim_{x\to 0} \mathbb{P}\left (\lim_{t \to \infty} \left |X^{s,x}(t) \right | =0 \right )=1\,;
\]

\item[(iii)] \textit{asymptotically stable in probability} if it is \textit{stable in probability} and for any $s \geq 0$ and $x \in \RR^n$ it holds
\[
\mathbb{P}\left (\lim_{t \to \infty} \left |X^{s,x}(t) \right | =0 \right )=1\,.
\]
\end{description}
\end{definition}

We will denote by $\mathcal{L}$ the \textit{infinitesimal generator} of the process \eqref{EQN:SDE}. Recall that, \cite{Kar}, for $f:\RR^n\to \RR$ regular enough the \textit{infinitesimal generator} $\mathcal{L}$ of the process $X$ satisfying equation \eqref{EQN:SDE} is defined as 
\[
\mathcal{L}f(x):= \lim_{t \to 0} \frac{\mathbb{E}_{t,x}[f(X(t))] - f(x)}{t}\,,
\]
being $\mathbb{E}_{t,x}$ the conditional expectation w.r.t. $t$ and $x$. It can be shown, \cite{Kar}, that the \textit{infinitesimal generator} of $X$ satisfying equation \eqref{EQN:SDE} is explicitly given by
\begin{equation}\label{EQN:InfG}
\begin{split}
\mathcal{L}f(x) &= \sum_{i=1}^n \mu_i(x) \partial_{x_i}f(x) + \frac{1}{2}\sum_{i,j=1}^n \left (\sigma(x)\sigma^T(x)\right )_{ij} \partial^2_{x_i \, x_j}f(x) = \\
&= \mu(x) \cdot \partial_x f(x) + \frac{1}{2}Tr\left [\sigma(x)\sigma^T(x)\partial_x^2 f(x)\right ]\,,
\end{split}
\end{equation}
where $\partial_x$ and $\partial_x^2$ are the first and second derivative in $x$, respectively.

Stability of a SDE can be inferred assessing certain properties of the \textit{infinitesimal generator} $\mathcal{L}$. In particular, the following stochastic Lyapunov theorem holds, \cite{Flo3,Kha}.

\begin{theorem}
Assume there exists a Lyapunov function $V \in C^2(D;\RR)$ positive definite in a bounded open set $D$ of $\RR^n$ containing the origin. If, for any $x \in D \setminus\{0\}$,
\begin{equation}
\mathcal{L}V(x) \leq 0\,,\quad \mbox{resp.} \quad \mathcal{L}V(x) < 0\,,
\end{equation}
then the equilibrium solution $X(t) \equiv 0$ of the SDE \eqref{EQN:SDE} is \textit{stable in probability}, resp. \textit{locally asymptotically stable}, in probability.

If further $D = \RR^n$, the Lyapunov function $V$ is said to be proper and the stability to be global.
\end{theorem}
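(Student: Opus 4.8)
The plan is to run the standard stochastic Lyapunov argument: show that $V$ evaluated along the process and stopped at the exit time from a small ball is a nonnegative supermartingale, then read off the two stability statements from the supermartingale maximal inequality and the supermartingale convergence theorem respectively. Concretely, I would fix $\epsilon>0$ small enough that the closed ball $\bar B_\epsilon=\{y:|y|\le\epsilon\}\subset D$, take an initial point $x\in B_\epsilon\setminus\{0\}$, and set $\tau_\epsilon=\inf\{t\ge s:|X^{s,x}(t)|\ge\epsilon\}$. Applying It\^o's formula to $V(X(t\wedge\tau_\epsilon))$ gives $V(X(t\wedge\tau_\epsilon))=V(x)+\int_s^{t\wedge\tau_\epsilon}\mathcal L V(X(r))\,\dd r+M_t$, where $M_t$ is the It\^o integral of $\partial_x V(X(r))\cdot\sigma(X(r))$ against $dW(r)$. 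On $\bar B_\epsilon$ both $\partial_x V$ and $\sigma$ are bounded, so $M$ is a genuine martingale, while $\mathcal L V\le 0$ on $\bar B_\epsilon\setminus\{0\}$ by hypothesis (the single point $\{0\}$ contributes zero Lebesgue time along the continuous paths). Hence $t\mapsto V(X(t\wedge\tau_\epsilon))$ is a nonnegative supermartingale and in particular $\mathbb E[V(X(t\wedge\tau_\epsilon))]\le V(x)$ for all $t\ge s$.

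For \emph{stability in probability}, I would note that on $\{\tau_\epsilon\le t\}$ path continuity forces $|X(\tau_\epsilon)|=\epsilon$, so $V(X(\tau_\epsilon))\ge m_\epsilon:=\min_{|y|=\epsilon}V(y)$, which is strictly positive since $V$ is positive definite and the sphere is compact. Combining this with the supermartingale bound, $m_\epsilon\,\mathbb P(\tau_\epsilon\le t)\le\mathbb E[V(X(t\wedge\tau_\epsilon))]\le V(x)$, and letting $t\to\infty$ yields $\mathbb P\!\left(\sup_{t\ge s}|X^{s,x}(t)|>\epsilon\right)\le V(x)/m_\epsilon$. Since $V$ is continuous with $V(0)=0$, the right-hand side tends to $0$ as $x\to 0$, which is exactly item (i) of Definition~\ref{DEF:Stab}.

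For \emph{local asymptotic stability} under the strict condition $\mathcal L V<0$ on $D\setminus\{0\}$, I would first invoke the supermartingale convergence theorem to get that $V(X(t\wedge\tau_\epsilon))$ converges a.s.\ as $t\to\infty$; on the event $A_\epsilon:=\{\tau_\epsilon=\infty\}$ (which has $\mathbb P(A_\epsilon)\ge 1-V(x)/m_\epsilon$ by the previous step) this says $V(X(t))\to V_\infty\ge 0$ a.s. The core of this part is to show $V_\infty=0$ a.s.\ on $A_\epsilon$: if instead $V_\infty>0$ on a subset of $A_\epsilon$ of positive probability, then, using $V(0)=0$ together with $|X(t)|<\epsilon$ on $A_\epsilon$, the path eventually remains in a compact annulus $K=\{r_1\le|y|\le\epsilon\}$ with $r_1>0$, where $\mathcal L V\le -c<0$; a Dynkin estimate on the exit time from $K$ then forces the expected occupation time of $K$ to be finite, contradicting that the path never leaves it. Since $V$ is positive definite, $V(X(t))\to 0$ implies $|X^{s,x}(t)|\to 0$, so $\mathbb P\!\left(\lim_{t\to\infty}|X^{s,x}(t)|=0\right)\ge\mathbb P(A_\epsilon)\ge 1-V(x)/m_\epsilon\to 1$ as $x\to 0$, which is item (ii). Finally, if $D=\RR^n$ and $V$ is proper, properness rules out explosion and the same estimates hold for every $x\in\RR^n$, giving the global versions.

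The step I expect to be the main obstacle is the last one, namely upgrading the almost-sure convergence of the supermartingale $V(X(t))$ to $V_\infty=0$. This genuinely requires the \emph{strict} negativity of $\mathcal L V$ and a careful localisation to annuli bounded away from both the origin and $\partial B_\epsilon$, together with a measurability/discretisation argument so that the Dynkin estimate can be applied from deterministic times. Everything else is routine bookkeeping with It\^o's and Dynkin's formulae and Doob's inequality for nonnegative supermartingales.
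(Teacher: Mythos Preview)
Your argument is the classical stochastic Lyapunov proof and is correct in outline; the paper, however, does not give its own proof of this theorem at all---it merely states the result and cites \cite{Flo3,Kha}. What you have written is essentially the standard route taken in Khasminskii's monograph (It\^o's formula plus stopping to get a nonnegative supermartingale, then the maximal inequality for stability in probability and the supermartingale convergence theorem plus a strict--negativity trapping argument for asymptotic stability), so there is nothing to compare against beyond the observation that your proposal reproduces the cited literature rather than anything original to the paper.
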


\subsection{On the passivity for controlled SDE}\label{SEC:Pass}

The present section is devoted to the introduction of the concept of passivity for controlled SDE and on its relation to stochastic stability.

Consider an \textit{input--state--output} stochastic process $\left (X(t)\right )_{t \geq 0}\in \RR^n$ satisfying the SDE
\begin{equation}\label{EQN:SDEC}
\begin{cases}
dX(t)=\mu(X(t),u(t))dt+\sigma(X(t))dW(t)\,,\\
y(t)=h(X(t),u(t))\,,
\end{cases}
\end{equation}
for $u \in \mathcal{U}$, being the space of all $\left (\mathcal{F}_t\right )_{t\geq 0}-$adapted process $u:[0,T]\to U$ so that $u \in L^2([0,T])$ $\mathbb{P}-$a.s. is a $U-$valued progressively measurable process, where $U$ is a closed subset of $\RR^m$, representing the domain of the control process acting on the state process $X$. Also, $y \in \mathcal{Y}$, being the space of all $\left (\mathcal{F}_t\right )_{t\geq 0}-$adapted process $\mathbb{P}-$a.s. is a $Y-$valued progressively measurable process, being $Y$ a closed subset of $\RR^m$.

We will denote by $X^0(t)$ the solution to the autonomous system \eqref{EQN:SDEC}, that is the solution with constant null control $u \equiv 0$, while, $\mathcal{L}_0$ is the infinitesimal generator of the autonomous system \eqref{EQN:SDEC}. Further, for a suitable regular enough function $f$, $\partial_x f(x,u)$ denoted the partial derivative w.r.t. the first argument whereas $\partial_u f(x,u)$ denotes the partial derivative w.r.t. the second argument.

Next is the definition of stochastic passivity, \cite[Definition 3.1]{Flo3}.

\begin{definition}\label{DEF:Pass}
The input--state--output system \eqref{EQN:SDEC} is said to be \textit{passive} if there exists a Lyapunov function $V$ on $\RR^n$, called \textit{storage function}, such that
\begin{equation}
\mathcal{L}V(x) \leq h^T(x,u) u\,,
\end{equation}
for every $(x,u) \in \RR^n \times U$.
\end{definition}

A straightforward application of It\^{o} formula and \textit{Dynkin lemma}, \cite{Kar}, shows that \textit{stochastic passivity}, as defined in Definition \ref{DEF:Pass}, implies that
\[
\mathbb{E}V(X(t))\leq V(x) + \mathbb{E} \int_0^t h^T(X(s),u(s)) u(s)ds \,.
\]

Strictly related to stability of a SDE, the following necessary conditions for the input--state--output system \eqref{EQN:SDEC} to be passive can be state, \cite{Flo3}. In the following, we will denote by $\mathcal{L}_0$ the \textit{infinitesimal generator} of the autonomous system \eqref{EQN:SDEC}, i.e. $u \equiv 0$.

\begin{theorem}
The following conditions are necessary for system \eqref{EQN:SDEC} to be passive:
\begin{description}
\item[(i)] $\mathcal{L}_0 V(x) \leq 0$, for every $x \in \RR^n$;

\item[(ii)] for every $x \in \mathcal{S} := \{ x \in \RR^N \, : \, \mathcal{L}_0 V(x) = 0 \}$ it holds
\[
\sum_{i=1}^n \partial_u \mu_i(x,0) \partial_{x_i} V(x) = h^T(x,0)\,;
\]

\item[(iii)] for every $x \in \mathcal{S} := \{ x \in \RR^N \, : \, \mathcal{L}_0 V(x) = 0 \}$ it holds
\[
\sum_{i=1}^n \partial_{u \,u}^2 \mu_i(x,0) \partial_{x_i} V(x) \leq \partial_u h^T(x,0) + \partial_u h(x,0)\,.
\]
\end{description}
\end{theorem}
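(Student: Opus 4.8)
The plan is to derive each of the three necessary conditions by a Taylor expansion of the passivity inequality in the control variable $u$ around $u = 0$, exploiting that $V$ is a storage function and reading off the constraints forced at orders $0$, $1$ and $2$ in $u$. First I would fix $x \in \RR^n$ and expand $\mathcal{L}V(x)$ as a function of $u$. Since the diffusion coefficient $\sigma$ does not depend on $u$, only the drift $\mu(x,u)$ carries the $u$-dependence in the generator \eqref{EQN:InfG}, so $\mathcal{L}V(x) = \mathcal{L}_0 V(x) + \sum_{i=1}^n \bigl(\mu_i(x,u) - \mu_i(x,0)\bigr)\partial_{x_i}V(x)$. Writing $\mu_i(x,u) = \mu_i(x,0) + \partial_u\mu_i(x,0)\,u + \tfrac12 u^T\partial_{uu}^2\mu_i(x,0)\,u + o(|u|^2)$ and doing the same for $h(x,u)^T u = h(x,0)^T u + u^T\partial_u h(x,0)\,u + o(|u|^2)$, the passivity inequality $\mathcal{L}V(x) \le h(x,u)^T u$ becomes a polynomial inequality in $u$ valid for all small $u$ (restricting to directions admissible in $U$; for condition (i) one takes $u = 0$ directly).

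The order-zero term gives $\mathcal{L}_0 V(x) \le 0$ for every $x$, which is (i). For $x \in \mathcal{S}$, i.e. where $\mathcal{L}_0 V(x) = 0$, the constant term vanishes and the inequality at first order forces the linear-in-$u$ coefficient to vanish: if $\sum_i \partial_u\mu_i(x,0)\partial_{x_i}V(x) - h(x,0)^T \ne 0$ as a covector, one could choose $u$ and $-u$ small to violate the inequality in one of the two directions. Hence $\sum_{i=1}^n \partial_u\mu_i(x,0)\partial_{x_i}V(x) = h(x,0)^T$, which is (ii). With the zeroth- and first-order terms cancelled on $\mathcal{S}$, the remaining inequality at second order reads $\tfrac12\sum_i \bigl(u^T\partial_{uu}^2\mu_i(x,0)u\bigr)\partial_{x_i}V(x) \le u^T\partial_u h(x,0)\,u + o(|u|^2)$ for all small admissible $u$; dividing by $|u|^2$ and letting $|u| \to 0$ along each fixed direction yields the quadratic-form inequality $\sum_{i=1}^n \partial_{uu}^2\mu_i(x,0)\partial_{x_i}V(x) \le \partial_u h(x,0) + \partial_u h(x,0)^T$ (the symmetrization on the right coming from $u^T A u = u^T \tfrac{A+A^T}{2} u$), which is (iii).

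The main obstacle, and the point that needs care rather than routine computation, is the handling of the constraint set $U$: the expansions above are licit for \emph{unconstrained} $u$ near $0$, but if $0$ lies on the boundary of $U$ one only gets to perturb into the tangent cone of $U$ at $0$, so the derived equalities and inequalities should be read as holding on that cone (and the clean statements (ii)--(iii) implicitly assume $0 \in \mathrm{int}\,U$, or at least that the relevant directions are available). I would state this assumption explicitly, then the argument is exactly the order-by-order matching sketched above. A secondary point is regularity: one needs $V \in C^2$ and $\mu, h$ twice differentiable in $u$ with the remainder genuinely $o(|u|^2)$ uniformly enough to pass to the limit, which is subsumed under the standing ``regular enough'' hypotheses on the coefficients, so I would simply invoke those.
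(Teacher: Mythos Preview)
The paper does not actually supply a proof of this theorem: it is stated as a known result and attributed to Florchinger, \cite{Flo3}, with no proof environment following the statement. So there is nothing in the paper to compare your argument against.

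That said, your approach is correct and is precisely the standard route (and essentially the one Florchinger uses): set $u=0$ to get (i), then on the zero set $\mathcal{S}$ Taylor-expand the passivity inequality in $u$ about $0$ and read off the first- and second-order constraints. Your observation that (ii)--(iii) tacitly require $0$ to be an interior point of $U$, so that one may perturb in every direction and in particular replace $u$ by $-u$ to force equality at first order, is well taken; the paper (and the cited source) work under this implicit assumption without flagging it.
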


In the particular case that the stochastic system \eqref{EQN:SDEC} is affine in the control, that is
\begin{equation}\label{EQN:SDECAff}
\begin{cases}
dX(t)=\left (\mu(X(t)) + \bar{\mu}(X(t))u(t) \right )dt+\sigma(X(t))dW(t)\,,\\
y(t)=h(X(t))
\end{cases}
\end{equation}
the following stochastic version of the \textit{Kalman--Yakubovich--Popov} (KYP) property can be proven, \cite{Flo1}.

\begin{definition}\label{DEF:KYP}
The stochastic system in affine form \eqref{EQN:SDECAff} satisfies the KYP property if there exists a proper Lyapunov function $V$ such that for every $x \in \RR^n$, it holds
\[
\begin{cases}
\LL_0 V(x) \leq 0\,,\\
\sum_{i=1}^n \partial_{x_i}V(x) \bar{\mu}(x) = h^T(x)\,.
\end{cases}
\]
\end{definition}

We thus have the following.

\begin{theorem}
The stochastic system \eqref{EQN:SDECAff} is passive if and only if it satisfies the KYP property.
\end{theorem}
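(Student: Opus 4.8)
The plan is to prove the two implications separately, with the forward direction being essentially a specialization of Definition \ref{DEF:Pass} and the converse requiring the affine structure in an essential way.

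\textbf{Sufficiency (KYP $\Rightarrow$ passive).} Suppose $V$ is a proper Lyapunov function satisfying the KYP property for system \eqref{EQN:SDECAff}. I would compute the infinitesimal generator of the controlled system at an arbitrary pair $(x,u)$. Since the diffusion coefficient $\sigma$ does not depend on $u$, the second-order term is unchanged by the control, and only the drift contributes the extra piece: writing $\mathcal{L}$ for the generator of \eqref{EQN:SDECAff} with control value $u$ and $\mathcal{L}_0$ for the autonomous generator, one gets
\[
\mathcal{L}V(x) = \mathcal{L}_0 V(x) + \sum_{i=1}^n \partial_{x_i}V(x)\,\bigl(\bar{\mu}(x)u\bigr)_i
= \mathcal{L}_0 V(x) + \Bigl(\sum_{i=1}^n \partial_{x_i}V(x)\,\bar{\mu}(x)\Bigr) u .
\]
Using the first KYP condition $\mathcal{L}_0 V(x)\leq 0$ and the second KYP condition $\sum_{i=1}^n \partial_{x_i}V(x)\bar{\mu}(x) = h^T(x)$, this yields $\mathcal{L}V(x) \leq h^T(x)u$, which, since $h$ does not depend on $u$ in the affine output \eqref{EQN:SDECAff}, is exactly the passivity inequality of Definition \ref{DEF:Pass} with storage function $V$.

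\textbf{Necessity (passive $\Rightarrow$ KYP).} Conversely, assume \eqref{EQN:SDECAff} is passive with storage function $V$. Running the same generator computation in reverse, passivity says
\[
\mathcal{L}_0 V(x) + \Bigl(\sum_{i=1}^n \partial_{x_i}V(x)\,\bar{\mu}(x)\Bigr) u \;\leq\; h^T(x)u
\qquad \text{for all } (x,u)\in\RR^n\times U .
\]
The left-hand side is affine in $u$ while the right-hand side is linear in $u$; this is where the affine-in-control hypothesis does the work. Taking $u=0$ immediately gives $\mathcal{L}_0 V(x)\leq 0$, the first KYP condition. For the second, fix $x$ and treat the inequality as a statement about the affine function $u\mapsto \mathcal{L}_0V(x) + \bigl(\sum_i \partial_{x_i}V(x)\bar\mu(x) - h^T(x)\bigr)u$ being $\leq 0$ on $U$; if $U$ contains a neighborhood of the origin in $\RR^m$ (or more generally if $0$ is interior enough along each coordinate direction), an affine function bounded above by zero on both sides of the origin must have vanishing linear part, forcing $\sum_{i=1}^n \partial_{x_i}V(x)\bar{\mu}(x) = h^T(x)$. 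Finally, properness of $V$ is part of the definition of the KYP property and is inherited directly from the standing assumption that the storage function in Definition \ref{DEF:Pass} is a Lyapunov function on all of $\RR^n$.

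\textbf{Main obstacle.} The genuinely delicate point is the necessity direction, specifically extracting the \emph{equality} $\sum_i \partial_{x_i}V(x)\bar\mu(x) = h^T(x)$ from a one-sided inequality. This requires that the admissible control set $U$ be rich enough near $0$ — intuitively, that one can perturb the control in both the $+$ and $-$ directions — so that an affine-in-$u$ quantity which stays nonpositive cannot have a nonzero slope. If $U$ is only a cone or a half-space this step can fail, so I would either invoke the implicit convention (common in this literature) that $U$ is a closed set with $0$ in its interior, or state the hypothesis explicitly. Everything else is a routine consequence of the It\^o formula and the explicit form \eqref{EQN:InfG} of the generator, together with the fact that the noise coefficient $\sigma$ is control-independent, which is precisely what makes the generator split cleanly into $\mathcal{L}_0 V$ plus a term linear in $u$.
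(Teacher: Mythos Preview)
Your argument is correct and is the standard one for this result. Note, however, that the paper does not supply its own proof of this theorem: it is stated as background material with a citation to \cite{Flo1}, so there is nothing to compare against beyond observing that your computation---splitting $\mathcal{L}V(x)=\mathcal{L}_0V(x)+\bigl(\sum_i\partial_{x_i}V(x)\bar\mu(x)\bigr)u$ and exploiting affinity in $u$---is exactly the approach in the cited reference. Your flag about needing $0$ in the interior of $U$ for the necessity direction is well taken; the paper only assumes $U$ is a closed subset of $\RR^m$, so strictly speaking this hypothesis is implicit.
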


\subsection{On the passivity for controlled stochastic port--Hamiltonian system}\label{SEC:PassSPHS}

Having introduced the main notations and results regarding stability and passivity of general $\RR^n-$valued SDE, we can thus introduce the notion of stochastic port--Hamiltonian system.

A stochastic PHS $\left (X(t)\right )_{t \geq 0}$ is the solution to the SDE
\begin{equation}\label{EQN:SPHS}
\begin{cases}
dX(t) = \left [(J-R)\partial_x H(X(t)) + g(X(t)) u(t)\right ]dt + \sigma(X(t)) dW(t)\,,\\
y(t)=g^T(X(t)) \partial H(X(t))\,,
\end{cases}
\end{equation}
being $J=J^T$ a given $n\times n$ matrix, $R \succeq 0$ a $n\times n$ positive semi--definite matrix representing dissipation, $H$ the \textit{Hamiltonian} of the system, $u \in \mathcal{U}$ the control and $y \in \mathcal{Y}$ the output of the system. As above, $W$ is a standard Brownian motion and $ dW(t)$ denoted the integration in the sense of It\^{o}. 

\begin{remark}
It is worth remarking that in \cite{CDPMSPHS} a stochastic port--Hamiltonian system has been defined in terms of the Stratonovich integral; the present work adopt the stochastic integration on the It\^{o} sense as passivity is typically addressed considering It\^{o} notion of integration. In \cite{CDPM_Tank}, an energy tank approach for a teleoperated system modelled as SPHS has been investigated. Similarly to the present paper, in \cite{CDPM_Tank}, the It\^{o} point of view has been considered, since the main object of investigation was the passivity of the system. Nonetheless, in that paper, it has been shown how the It\^{o} SPHS can be converted into a corresponding Stratonovich SPHS, showing further how passivity is affected.

\demo
\end{remark}

Using \textit{Dynkin lemma} and It\^{o} formula, it can be seen that the SPHS \eqref{EQN:SPHS} satisfies the energy preserving property
\begin{equation}\label{EQN:EnPres}
H(X(t_2))-H(X(t_1))= \int_{t_1}^{t_2} \mathcal{L}H(X(s)) ds + \int_{t_1}^{t_2} \partial^T H(X(s)) \sigma(X(s))  dW(s)\,,
\end{equation}
being $\mathcal{L}$ the infinitesimal generator \eqref{EQN:InfG} for the It\^{o} SPHS \eqref{EQN:SPHS} defined as
\begin{equation}\label{EQN:IngGSPHS}
\mathcal{L}H(x) = \left [\left (J-R\right )\partial_x H(x) + g(x)u\right ] \cdot \partial_x H(x) + \frac{1}{2}Tr\left [\sigma(x)\sigma^T(x)\partial_x^2 H(x)\right ]\,.
\end{equation}

We thus have the following result concerning the passivity and convergence of a SPHS, \cite[Theorem 4]{Had} or also \cite{Flo3,Sat2}.

\begin{proposition}\label{PRO:PassSPHS}
Consider the stochastic PHS \eqref{EQN:SPHS}, if
\begin{equation}\label{EQN:PassCond1}
2\partial_x^T H(x) R(x) \partial_x H(x) \geq Tr\left [\partial_x^2H(x) \sigma(x)\sigma^T(x)\right ]\,,
\end{equation}
then, the SPHS \eqref{EQN:SPHS} is passive. 

Further if:
\begin{itemize}
\item[(i)] the SPHS \eqref{EQN:SPHS} is passive;
\item[(ii)] the noise vanishes at an equilibrium configuration, that is $\sigma(x_e)=0$;
\end{itemize} 
then the equilibrium solution $X(t) \equiv x_e$ is Lyapunov stable in probability. If, in addition
\[
\left \{ x \in \RR^n \,:\, \mathcal{L}_0 H(x) = 0\right \} = \{x_e\}\,,
\]
then the equilibrium solution $X(t) \equiv x_e$ is locally asymptotically stable in probability.
\end{proposition}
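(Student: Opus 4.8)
The plan is to verify the passivity inequality of Definition \ref{DEF:Pass} directly, and then to invoke the stochastic Lyapunov theorem together with the necessary-condition structure already recalled. For the first claim, I would take the Hamiltonian $H$ itself as the candidate storage function $V$. Plugging the SPHS data $\mu(x,u)=(J-R)\partial_x H(x)+g(x)u$ and the given $\sigma$ into the expression \eqref{EQN:IngGSPHS} for the generator, the drift term splits as $\partial_x^T H(x)(J-R)\partial_x H(x)+\partial_x^T H(x)g(x)u$. Since $J=J^T$ is skew in the sense that $\partial_x^T H\,J\,\partial_x H$ — wait, here $J$ is symmetric, so one must be careful: the relevant cancellation is the standard one where $J$ contributes zero to the energy balance only if $J$ is skew-symmetric; in the convention of this paper $J=J^T$ plays the role of the (symmetric) structure matrix, so I would instead rely on the fact that $y^T u = (g^T\partial_x H)^T u = \partial_x^T H\, g\, u$, i.e. the $g(x)u$ part of $\mathcal{L}H$ is exactly $h^T(x,u)u$. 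Thus $\mathcal{L}H(x) = h^T(x,u)u + \partial_x^T H(x)(J-R)\partial_x H(x) + \tfrac12 \Tr[\sigma\sigma^T \partial_x^2 H]$, and passivity reduces to showing the remaining terms are $\le 0$. The hypothesis \eqref{EQN:PassCond1} handles the $-R$ and the trace term together; the $J$ term I expect to vanish or be absorbed by the structural assumptions on $J$ (this is the point I would double-check against the paper's earlier conventions, since it is the one place the argument could stall).

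For the second claim, assume passivity and $\sigma(x_e)=0$. I would first reduce to an equilibrium at the origin by the change of variables $\tilde X = X - x_e$, or equivalently work with the shifted Lyapunov function $V(x) = H(x) - H(x_e) - \partial_x^T H(x_e)(x-x_e)$, which is positive definite near $x_e$ provided $x_e$ is a strict local minimum of $H$ — this is implicit in calling $x_e$ an equilibrium configuration, and I would make it explicit. Evaluating the generator of the autonomous system ($u\equiv0$) on $V$: since $\sigma(x_e)=0$ the diffusion part drops out at $x_e$, and one checks $\mathcal{L}_0 V(x_e)=0$ while $\mathcal{L}_0 V(x)\le 0$ on a punctured neighbourhood of $x_e$ by the passivity bound \eqref{EQN:PassCond1} applied to $V$ in place of $H$ (the second derivatives agree, $\partial_x^2 V = \partial_x^2 H$, and the gradient vanishes at $x_e$ so the $R$-term is still nonnegative). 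Then the stochastic Lyapunov theorem quoted above gives Lyapunov stability in probability of $X(t)\equiv x_e$.

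For the asymptotic statement, add the hypothesis $\{x: \mathcal{L}_0 H(x)=0\}=\{x_e\}$. The argument is the stochastic LaSalle-type invariance principle: stability in probability already localizes trajectories near $x_e$ with high probability, and on any such neighbourhood $\mathcal{L}_0 V < 0$ away from $x_e$, so $V(X(t))$ is a nonnegative supermartingale converging a.s.; its limit must be supported where $\mathcal{L}_0 V = 0$, i.e. at $x_e$, forcing $|X(t)| \to |x_e|$ and then $X(t)\to x_e$ a.s. locally. This yields local asymptotic stability in probability in the sense of Definition \ref{DEF:Stab}(ii).

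The main obstacle I anticipate is not the Lyapunov/LaSalle machinery — that is standard and essentially quoted — but the bookkeeping in the very first step: getting the $J$-term in $\mathcal{L}H$ to disappear under this paper's sign conventions ($J=J^T$ rather than the usual skew-symmetric $J$), and making sure that passing from $H$ to the shifted, positive-definite $V$ preserves the inequality \eqref{EQN:PassCond1} in the form needed. Once those are pinned down, the rest is an application of the theorems already stated.
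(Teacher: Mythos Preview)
Your approach is essentially the paper's: compute $\mathcal{L}H$ directly, identify the $g(x)u$ contribution as $y^Tu$, use \eqref{EQN:PassCond1} to bound the remainder, and then appeal to standard stochastic Lyapunov/LaSalle results for the stability claims. The paper does exactly this, citing Khasminskii's book for the second part rather than spelling out the supermartingale argument.

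Two minor points. First, your hesitation about the $J$-term is caused by a typo in the statement of \eqref{EQN:SPHS}: elsewhere (e.g.\ the proof of Proposition~\ref{PRO:SPass1Add}) the paper explicitly uses the skew-symmetry of $J$, so $\partial_x^T H\,J\,\partial_x H=0$ and the term drops as you expect. Second, your shifted candidate $V(x)=H(x)-H(x_e)-\partial_x^T H(x_e)(x-x_e)$ is more than you need and your verification of $\mathcal{L}_0 V\le 0$ is not quite right as written: with that $V$ the drift contribution is $(\partial_x H(x)-\partial_x H(x_e))^T(J-R)\partial_x H(x)$, and neither the $J$-part nor the $R$-part is sign-definite in general. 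The clean route---and the one implicit in the paper and in Khasminskii---is to take $x_e$ a strict local minimum of $H$ (so $\partial_x H(x_e)=0$), set $V=H-H(x_e)$, and then $\partial_x V=\partial_x H$, $\partial_x^2 V=\partial_x^2 H$, so \eqref{EQN:PassCond1} gives $\mathcal{L}_0 V\le 0$ verbatim and the cited theorems apply directly.
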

\begin{proof}
Using the infinitesimal generator \eqref{EQN:InfG}, it immediately follows that condition \eqref{EQN:PassCond1} yields
\[
\mathcal{L} H(x) \leq y^T(t)u(t)\,,
\]
which is the Definition \ref{DEF:Pass} of passivity for the SPHS \eqref{EQN:SPHS}.

The convergence thus follows using \cite[Thm. 5.3, Cor. 5.1, Thm. 5.11]{Kha}.
\end{proof}

\begin{remark}
Above results highlight how classical notions of stochastic passivity and stability have clear weaknesses. In particular, a vanishing noise need to be required, that is $\sigma(x_e)=0$ at the desired equilibrium state $x_e$. Such assumption is in general strong, but in the context of energy shaping for SPHS this assumption can have even stronger implications. In particular, it is a system property and cannot be modified in any way suitably shaping the energy of the system. The main idea of energy shaping is to derive a feedback control law $u = \phi(X)$ so that the dynamics of the stochastic equation under the feedback law preserves the port--Hamiltonian structure. The final goal is to shape the energy of the system so that it can be stabilized at a certain state $x_e$, which was not the minimum of the original Hamiltonian function. Nonetheless, since only the Hamiltonian function can be suitably shaped and in particular the noise cannot be modified in any way, this new equilibrium point should already be a point for which the noise vanishes from the beginning.

\demo
\end{remark}

\section{Ultimately stochastic passivity and stability for controlled stochastic port--Hamiltonian system}\label{SEC:WeakPass}

In the introduction the main limitations behind the classical notion of passivity have been briefly mentioned, explaining how we intent to weaken certain approaches to suitably extend the idea of energy shaping to a broader stochastic context. The present section is devoted to introducing a weak notion of stochastic passivity and a consequently related notion of stability, that appears to be tailor--made for tackling the problem of energy shaping for SPHS. Such a notion has been first introduced in \cite{Fang} and already used in the SPHS in \cite{CDPM_Tank,CDPM_IFAC}.

The definition of \textit{ultimately stochastic passivity} for a stochastic port--Hamiltonian system, \cite{CDPM_IFAC} is recalled.
\begin{definition}\label{DEF:USP}[Ultimately stochastic passivity]
The stochastic PHS \eqref{EQN:SPHS} is said to be \textit{ultimately stochastic passive} if for $x_e \in \RR^n$ and for any $x$ such that $\|x-x_e \| \geq C$, for a given constant $C>0$ called passivity radius, it holds
\[
\mathcal{L}H(x) \leq y^T u\,.
\]

If further there exists $\delta_C > 0$ such that for $\| x-x_e \| \geq C$ it holds
\[
\mathcal{L}H(x) \leq y^T u - \delta_C \| x-x_e \|^2 \,,
\]
then system \eqref{EQN:SPHS} is said to be \textit{strictly ultimately stochastic passive}.
\end{definition}

As mentioned in \cite{CDPM_Tank,CDPM_IFAC}, to highlight the connection of weak passivity with the limiting invariant measure and with the notion of deterministic ultimately bounded process, we will use the name \textit{ultimately stochastic passivity} instead of \textit{weak passivity}. In particular, our choice is motivated by the fact that the concept of \textit{weak passivity} is closely related to a more general notion of convergence in the deterministic setting and called \textit{ultimately bounded}. In fact, in the presence of a non-vanishing term, as in the present context with an additive noise, the process does not converge to an equilibrium but instead it can be proven to be bounded in a suitable domain. In the deterministic setting, the bounded stability can be proven to hold if there exists a Lyapunov function $V$, so that $\dot{V}(x)<0$, $\forall$ $x$ such that $\|x-x_e\|>C$, with a given $x_e$. In the stochastic setting we could retrieve a similar result choosing as candidate Lyapunov function the Hamiltonian of the system. If the process $X$ is \textit{ultimately stochastic passive}, or equivalently in the terminology of \cite{Fang} \textit{weakly stochastic passive}, then for the autonomous process with null control, i.e., $u \equiv 0$, it holds $\mathcal{L}V(x)<0$, $\forall$ $x$ such that $\|x-x_e\|>C$.

The notion of ultimately stochastic passivity can be thought as follows: for $X$ converging to $x_e=0$ we have that the system is not passive, as the noise keeps injecting energy into the system preventing the system from asymptotically stabilizing at $x_e=0$. Nonetheless, the system cannot exhibits non stationary behaviours since, as soon as the process exits a suitable ball of radius $C$, the system becomes passive and the energy injected by the noise into the system is strictly less then the one dissipated, so that the system recovers stability. It follows that, at large time, the system will be forced to stay in a fixed domain and keeps oscillating around the stationary point $x_e=0$ according to a suitable invariant law.

\begin{remark}
As it will be clear in a while, the notion of \textit{strictly ultimately stochastic passivity} is fundamental in proving the existence and uniqueness of an invariant measure for a stochastic system. In general to ensure stability of a stochastic system it is not enough to require only \textit{ultimately stochastic passivity}, namely that there exists $C > 0$, so that for all $\|x\| \geq C$, it holds
\[
\mathcal{L} H(x) < 0\,.
\]

As a counterexample consider the system
\begin{equation}\label{EQN:ExampleUnst}
dX(t) = \frac{X(t)}{X^2(t)+1}dt + dW(t)\,,
\end{equation}
and as Lyapunov candidate the function
\[
H(x) = \log^2 (1+|x|)\,.
\]

The explicit computation shows that there exists $C>0$ so that, for $\|x\| \geq C$ it holds
\begin{equation}\label{EQN:CondInstab}
\mathcal{L} H(x) \leq 0\,.
\end{equation}

Nonetheless, system \eqref{EQN:ExampleUnst} diverges, hence becoming unstable. The key point is that, even if condition \eqref{EQN:CondInstab} holds true, it can be seen that
\[
\lim_{x \to \pm \infty} \mathcal{L} H(x) = 0\,.
\]
To avoid such a phenomenon, thus ensuring system stability, the correct requirement is that there exist $C>0$ and $\epsilon >0$, such that
\[
\mathcal{L} H(x) < - \epsilon\,.
\]
for $\|x\| \geq C$.

\demo
\end{remark}

Although Definition \ref{DEF:USP} might seem similar to the standard definition, it has some key aspects that makes it more suitable to be adapted to study SPHS and in general to address the problem of energy shaping. Some of these key features will be showed in the remaining of the current section, other will clearly emerges in subsequent sections.

A remarkable aspects of \textit{ultimately stochastic passivity} of SPHS, is that under general and relevant setting, it can be shown that a SPHS cannot be \textit{passive} but on the contrary it is always \textit{ultimately stochastic passive}. 

\begin{proposition}\label{PRO:SPass1Add}
Consider the stochastic PHS \eqref{EQN:SPHS} with additive noise, i.e. $\sigma(X(t)) \equiv \sigma$, being $\Sigma:= \sigma^T \sigma \succ 0$ a $n \times n$ positive definite matrix, and $H_a$ quadratic Hamiltonian of the form $H(X(t)) = \frac{1}{2} X^T(t) \Lambda X(t)$, with $\Lambda$ a positive definite symmetric $n \times n$ matrix.

Then the SPHS is never \textit{passive} but it is always \textit{ultimately stochastic passive}.
\end{proposition}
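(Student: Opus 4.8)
The plan is to compute the generator \eqref{EQN:IngGSPHS} explicitly on the quadratic Hamiltonian and then read off both assertions from the sign of the remainder $\mathcal{L}H(x)-y^T u$: first at the natural equilibrium, then at infinity. Substituting $H(x)=\tfrac12 x^T\Lambda x$, hence $\partial_x H(x)=\Lambda x$ and $\partial_x^2 H(x)=\Lambda$, into \eqref{EQN:IngGSPHS}, using that the interconnection term $x^T\Lambda J\Lambda x$ vanishes ($\Lambda J\Lambda$ being skew), and recognising $y^T u=u^T g^T(x)\Lambda x$, one obtains the identity
\[
\mathcal{L}H(x)=y^T u-x^T\Lambda R\Lambda x+\kappa,\qquad \kappa:=\tfrac12\Tr\left[\sigma\sigma^T\Lambda\right].
\]
Two elementary facts then drive everything: $\kappa>0$, since $\Tr[\sigma\sigma^T\Lambda]=\|\Lambda^{1/2}\sigma\|_F^2$ and $\sigma$ is invertible (because $\Sigma=\sigma^T\sigma\succ0$); and $\Lambda R\Lambda\succ0$, since $\Lambda\succ0$ and the dissipation is effective, $R\succ0$ — this positive-definiteness of $R$ is the hypothesis really being used, the statement failing already for $R=0$, where $\mathcal{L}H\equiv y^T u+\kappa$.

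For the first assertion, evaluate the identity at $x_e=0$, the unique minimiser of $H$; there $y=g^T(x)\Lambda x$ vanishes, so the passivity inequality $\mathcal{L}H(x)\le y^T u$ of Definition \ref{DEF:Pass}, taken with the natural storage function $H$, would force $\kappa\le 0$, a contradiction. In fact $\mathcal{L}H(x)-y^T u=\kappa-x^T\Lambda R\Lambda x>0$ on the whole nonempty open neighbourhood $\{x:x^T\Lambda R\Lambda x<\kappa\}$ of the origin, so no localisation helps: the SPHS is never passive. (Should one wish to rule out every storage function, not just $H$, it suffices to invoke the necessary conditions for passivity stated after Definition \ref{DEF:Pass}: they force $\mathcal{L}_0 V(x_e)\le0$ for any storage function $V$, whereas $\mathcal{L}_0 V(x_e)=\tfrac12\Tr[\sigma\sigma^T\partial_x^2 V(x_e)]>0$ as soon as $V$ has a non-degenerate minimum at $x_e$, the first-order part dropping out since the autonomous drift $(J-R)\Lambda x$ vanishes at $x_e=0$.)

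For the second assertion, take again $x_e=0$. From the identity,
\[
\mathcal{L}H(x)-y^T u=-x^T\Lambda R\Lambda x+\kappa\le-\lambda_{\min}(\Lambda R\Lambda)\,\|x\|^2+\kappa ,
\]
so choosing the passivity radius $C:=\big(\kappa/\lambda_{\min}(\Lambda R\Lambda)\big)^{1/2}$ gives $\mathcal{L}H(x)\le y^T u$ for every $x$ with $\|x-x_e\|=\|x\|\ge C$, which is exactly Definition \ref{DEF:USP}. Keeping half of the quadratic term in reserve and enlarging $C$ to $\big(2\kappa/\lambda_{\min}(\Lambda R\Lambda)\big)^{1/2}$ in fact delivers the strict version, with $\delta_C=\tfrac12\lambda_{\min}(\Lambda R\Lambda)$.

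The computation itself is routine; the only genuine obstacle is conceptual — to fix ``passive'' as ``passive with the storage function $H$'' so that the output map $y=g^T\partial_x H$ is the one being matched, and to make explicit that positive-definiteness of $\Lambda R\Lambda$, equivalently of $R$, is precisely what allows the drift to absorb the constant energy rate $\kappa$ injected by the additive noise; without it neither conclusion survives.
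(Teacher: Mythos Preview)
Your argument is correct and follows essentially the same route as the paper: compute $\mathcal{L}H(x)=y^Tu-x^T\Lambda R\Lambda x+\tfrac12\Tr[\sigma\sigma^T\Lambda]$, observe the trace term is a strictly positive constant, and conclude that the passivity inequality fails near the origin yet holds outside a large enough ball. Your write-up is in fact sharper than the paper's --- you give explicit constants for the passivity radius $C$ and for $\delta_C$ in the strict version, and you rightly flag that the conclusion rests on $R\succ0$ (the paper only assumes $R\succeq0$ in \eqref{EQN:SPHS} and acknowledges the need for $R\succ0$ in the paragraph following the proof).
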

\begin{proof}
Using the skew--symmetric property of the matrix $J$, the infinitesimal generator \eqref{EQN:InfG} of the It\^{o} process SPHS \eqref{EQN:SPHS} $\mathcal{L}$ is given by
\begin{align}
\mathcal{L}H(x) &= \partial_x^T H(x) \left [\left (J(x) - R(x) \right )\partial_x H(x) + g(x)u \right ] + \frac{1}{2}Tr\left [\partial_x^2 H(x)\Sigma \right ] = \\
&=-\partial_x^T H(x) R(x) \partial_x H(x) + \partial_x^T H(x) g(x)u + \frac{1}{2}Tr\left [\partial_x^2 H(x)\Sigma\right ]\,.
\end{align}

Using the quadratic form of the Hamiltonian function it follows that
\begin{align}
\partial_x^T H(x) R(x) \partial_x H(x) &= x^T \Lambda R(x) \Lambda x\,,\\
Tr\left [\partial_x^2 H(x)\Sigma \right ] &= Tr\left [\Lambda \Sigma \right ]\,.
\end{align}

Since $Tr\left [\Lambda \Sigma \right ]>0$ is constant and strictly positive, it is immediate to see that for $x$ sufficiently small, that is it exists $\epsilon >0$ such that for $\|x\|<\epsilon$, the \textit{stochastic passivity} is violated as
\begin{equation}\label{EQN:StrongPBound}
x^T \Lambda R \Lambda x < Tr\left [\Lambda \Sigma \right ]\,.
\end{equation}

On the contrary, there exists a positive constant $C>0$ such that, for $\|x\| \geq C$ it holds
\begin{equation}\label{EQN:WeakPBound}
x^T \Lambda R(x) \Lambda x \geq Tr\left [\Lambda \Sigma \right ]>0\,,
\end{equation}
implying that, for $\|x\| \geq C$, there exists $\delta_C > 0$ such that
\[
\mathcal{L}H(x) \leq \partial_x^T H(x) g(x)u = y^T(t) u(t) - \delta_C \|x\|^2\,,
\]
which is the Definition \ref{DEF:USP} of \textit{ultimately stochastic passivity}.
\end{proof}

A further immediate and relevant consequence of Proposition \ref{PRO:SPass1Add} is that, for the class of SPHS considered above, that is SPHS with additive noise and quadratic Hamiltonian, no additional conditions have to be assumed to guarantee the \textit{ultimately stochastic passivity} of the stochastic system. This is in contrast to the classical notion of stochastic passivity, where an additional condition compared to the standard deterministic setting must be imposed. In fact, if \textit{ultimately stochastic passivity} is considered, the SPHS considered in Proposition \ref{PRO:SPass1Add} is passive under the typically condition $R \succ 0$.

The notion of \textit{ultimately stochastic passivity} is strictly related to a convergence in a suitable weak sense of the SPHS. In particular, we will show that if the SPHS is \textit{strictly ultimately stochastic passive}, then it converges toward the unique invariant measure of the system. 

In order to prove the convergence of the SPHS toward an invariant measure, the Feller property and the transition Markov semigroup for the SPHS have to be studied. We will first recall basic definitions and results about ergodicity and Markov property for stochastic differential equations; we refer the reader to \cite{Dap,Bor_book,Kha} for further details.

In the following we will assume without loss of generality that the SPHS \eqref{EQN:SPHS} is equipped with the initial condition $X(s)=x$; we will denote for short by $X(t) \equiv X^{s,x}(t)$ the solution of the SPHS \eqref{EQN:SPHS} at time $t$ with initial time $s<t$ and initial state $x \in \RR^n$. We will say that the SPHS is a Markov process on $\RR^n$, if
\[
\mathbb{P}\left (\left .X(t) \in B\right | \mathcal{F}_s\right ) = \mathbb{P}\left (\left .X(t) \in B \right |X(s)\right )\,,\quad \mathbb{P}-\mbox{a.s.}\,,
\]
for all $t \geq s$ and Borel set $B \in \mathcal{B}(\RR^n)$. In the following we will consider time homogeneous Markov process so that the considered process is invariant up to a time rescaling. This means that it is equivalent to consider as initial time $s=0$; for this reason in the following we will omit explicitly the dependence upon the initial time $s$. For any \textit{Markov process} we can introduce the notion of \textit{Markov transition function} $p(t,x,B)$, namely
\[
\mathbb{P}(X^x(t) \in B) =: p(t,x,B)\,,\quad B \in \mathcal{B}(\RR^n)\,.
\]

We will say that the transition semigroup $p$ is called \textit{Feller semigroup} if the \textit{Markov semigroup} $P_t$ defined as
\[
P_t f(x) := \mathbb{E} f(X^x(t))\,,
\] 
is bounded and continuous for any $f \in C_b(\RR^n)$, being $C_b(\RR^n)$ the space of bounded and continuous function on $\RR^n$. If $P_t f(x)$ is continuous and bounded for any $t > 0$ and for any $f \in C_b(\RR^n)$, then it is called \textit{strongly Feller semigroup}. If, for $t >0$, all \textit{Markov semigroup} $P_t$ are equivalent, then $P_t$ is called $t-$regular.

The \textit{Markov semigroup} and the \textit{Markov transition function} are connected by the following
\begin{equation}\label{EQN:MarkovSp}
P_t f(x) = \int_{\RR^n} f(y)p(t,x,dy)\,,
\end{equation}
that can be also expressed as
\[
P_t \mathbbm{1}_{B}(x) = p(t,x,B)\,, \quad B \in \mathcal{B}(\RR^n))\,.
\]

Under certain regularity condition, \cite{Lor}, the function $v(t,x)$ defined through \textit{Markov semigroup} in equation \eqref{EQN:MarkovSp} as
\[
v(t,x) := P_t f(x)\,,
\]
is the solution of the Cauchy problem
\begin{equation}\label{EQN:Cauchy}
\begin{cases}
\partial_t v(t,x) - \mathcal{L}v(t,x) = 0\,,\quad (t,x) \in \RR_+ \times \RR^n\,,\\
v(0,x) = f(x)\,,
\end{cases}
\end{equation}
where $\mathcal{L}$ is the infinitesimal generator of the Markov process with \textit{transition function} $p(t,x,B)$.

For the definitions of \textit{strongly Feller Markov semigroup} and \textit{regular Markov semigroup}, that will be used later, we refer to the literature \cite{Dap}.

We can give the following definition of \textit{invariant measure} for a SPHS, \cite[Definition 1.5.14]{Bor_book}.

\begin{definition}\label{DEF:Inv}
Consider the SPHS \eqref{EQN:SPHS}, a measure $\rho$ is said to be an \textit{invariant measure} for the SPHS \eqref{EQN:SPHS} if it holds
\begin{equation}\label{EQN:Inv}
\int_{\RR^n} p(t,x,B)\rho(dx) = \rho(B)\,.
\end{equation}
\end{definition}

The Definition \ref{DEF:Inv} can be equivalently written in terms of the Markov transition semigroup $P_t$ as
\begin{equation}\label{EQN:InvMarkovP}
\int_{\RR^n} P_t f(x)\rho(dx) = \int_{\RR^n} f(x) \rho(dx)\,,\quad f \in B_b(\RR^n)\,,
\end{equation}
being $B_b(\RR^n)$ the set of Borel and bounded function over $\RR^n$.

If the process $X$ is a Feller process, then it can be shown that, \cite[Lemma 2.6.14]{Bor_book}, a measure $\rho$ is invariant according to the Definition \ref{DEF:Inv} if and only if it is \textit{infinitesimally invariant}, that is,
\[
\int_{\RR^n} \mathcal{L}f(x)\rho(dx) = 0\,,
\]
for any $f$ in the domain of the \textit{infinitesimal generator} $\mathcal{L}$ of $X$.

Further, if the transition semigroup $P_t$ admits an invariant measure $\rho$, then the semigroup can be extended to a semigroup of bounded operators in the space $L^p(\RR^n,\rho)$, \cite[Corollary 8.1.7, Proposition 8.1.8]{Lor}. Such characterization allows to investigate the long-time behaviour of the semigroup; in particular it holds that
\[
\lim_{t \to \infty} \|P_t f - \bar{f}\|_p = 0\,,\quad \bar{f}:= \int_{\RR^n} f \rho(dx)\,,
\]
which corresponds to the long-time behaviour of equality \eqref{EQN:Cauchy}.

For what concern existence of an invariant measure, exploiting the \textit{ultimately stochastic passivity} property of the system, we will show that there exists a unique invariant measure. In particular, the existence follows from the next result, \cite[Prop. 3.1]{Las}, that we report in order to make the treatment as much self-contained as possible. 

\begin{proposition}[Proposition 3.1 \cite{Las}]\label{PRO:Las}
Let $\left (X,\| \cdot\|_X\right )$ be a complete separable metric space and let $\left (P_t\right )_{t \geq 0}$ be the semigroup of Markov operators corresponding to a Markov process which satisfies the Feller property. Assume that there exist a compact set $B$ and a point $x \in X$ such that
\[
\lim \sup_{T \to \infty} \left (\frac{1}{T}\int_0^T P_t \mathbbm{1}_{B}(x)dt \right ) > 0\,.
\]
Then the Markov process has a stationary distribution.
\end{proposition}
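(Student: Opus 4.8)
The plan is to prove the statement by the classical Krylov--Bogoliubov averaging argument, upgraded so that the single recurrence hypothesis on the compact set $B$ plays the role usually played by a tightness assumption. First I would introduce, for the fixed point $x \in X$ supplied by the hypothesis, the family of Ces\`aro--averaged occupation measures $\mu_T$ defined on $\mathcal{B}(X)$ by $\mu_T(A) := \frac{1}{T}\int_0^T P_t\mathbbm{1}_{A}(x)\,dt$, equivalently characterised by $\int_X f\,d\mu_T = \frac{1}{T}\int_0^T P_t f(x)\,dt$ for $f \in C_b(X)$. Each $\mu_T$ is a probability measure, and since $P_t\mathbbm{1}_{B}(x) = p(t,x,B)$ the hypothesis reads $\limsup_{T\to\infty}\mu_T(B) = \delta > 0$; I would fix a sequence $T_n \uparrow \infty$ along which $\mu_{T_n}(B) \to \delta$.

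Second, I would record the asymptotic invariance of these averages. Using the semigroup property and a telescoping of the time integral, for every $s \ge 0$ and $f \in C_b(X)$ one finds $\int_X P_s f\,d\mu_T - \int_X f\,d\mu_T = \frac{1}{T}\big(\int_T^{T+s}P_t f(x)\,dt - \int_0^{s}P_t f(x)\,dt\big)$, whose modulus is bounded by $\tfrac{2s}{T}\|f\|_\infty$ and hence vanishes as $T \to \infty$. Thus any weak limit of the $\mu_{T_n}$ will be left invariant by each adjoint $P_s^*$, provided the Feller property is invoked to guarantee $P_s f \in C_b(X)$ so that both integrals pass to the limit.

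Third, I would extract the limit. The compactness of $B$ together with $\mu_{T_n}(B) \to \delta > 0$ supplies a nontrivial limiting mass: passing to a further subsequence, the $\mu_{T_n}$ converge weakly to a measure $\mu$ with $\mu(B) \ge \delta > 0$, so $\mu \neq 0$. Combining this weak convergence with the asymptotic invariance of the previous step and the Feller continuity of $P_s$ yields $\int_X P_s f\,d\mu = \int_X f\,d\mu$ for all $f \in C_b(X)$ and all $s \ge 0$, that is $P_s^*\mu = \mu$. Normalising $\bar\mu := \mu/\mu(X)$ then produces a stationary distribution, as required.

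The main obstacle is precisely the extraction step, because the hypothesis only controls the mass on one compact set and does not by itself give tightness of the whole family $\{\mu_{T_n}\}$: a priori mass could escape to infinity, so that the weak limit is a strict sub-probability measure, or the passage to the limit fails for test functions in $C_b(X)$ with non-compact support. The crux is therefore to convert the recurrence condition $\limsup_T \mu_T(B) > 0$ into genuine weak (not merely vague) convergence to a nonzero invariant measure. In the locally compact setting relevant to the application $X = \RR^n$ this is cleanest: one uses vague sequential compactness of sub-probability measures, extracts a vague limit $\mu$ that is nonzero thanks to $\mu(B) \ge \delta$, verifies invariance against compactly supported test functions, and finally argues that $P_s^*\mu = \mu$ forces $\mu$ to be a finite invariant measure with no loss of mass. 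I would devote the bulk of the proof to this point, the remaining steps being the routine estimates sketched above.
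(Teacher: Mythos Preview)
The paper does not prove this proposition at all: it is quoted verbatim from \cite{Las} (Lasota--Szarek) and used as a black box in the proof of Proposition~\ref{PRO:Conv}. There is therefore no proof in the paper to compare against; your proposal is an attempt to supply an argument the authors deliberately outsourced.

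As to the proposal itself: the Krylov--Bogoliubov scheme you outline is indeed the standard route, and you have correctly isolated the only nontrivial point, namely that the hypothesis provides recurrence to a single compact set rather than tightness of the whole family $\{\mu_{T_n}\}$. Your resolution, however, remains incomplete even in the locally compact case you retreat to. The difficulty is in the invariance step for the vague limit: for $f \in C_c(X)$ one has $\int f\,d\mu_{T_n} \to \int f\,d\mu$, but passing to the limit in $\int P_s f\,d\mu_{T_n}$ requires $P_s f$ to lie in the test class for vague convergence, and the Feller property only gives $P_s f \in C_b(X)$, not $C_0(X)$. So the sentence ``verifies invariance against compactly supported test functions'' hides exactly the gap you claim to be addressing. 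The actual argument in \cite{Las} works on arbitrary Polish spaces (not assumed locally compact) and proceeds differently: one exploits tightness of the restricted measures $\mu_{T_n}(\,\cdot\,\cap B)$, which live on the compact $B$, extracts a limit there, and then uses a lower-semicontinuity/Portmanteau argument together with the Feller property to show the limit extends to a genuine finite invariant measure. If you want to fill in a proof, that is the mechanism you should look up; your current sketch identifies the obstacle but does not overcome it.
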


We thus have the following notion of convergence for a SPHS.

\begin{definition}\label{DEF:WConv}
Consider the SPHS \eqref{EQN:SPHS} and assume that it admits an invariant measure $\rho$. If for any Borel set $B$ it holds
\begin{equation}\label{EQN:PassCond1b}
\lim_{t \to \infty} p(t,x,B) = \rho(B)\,.
\end{equation}
and
\begin{equation}\label{EQN:PassCond}
\lim_{T \to \infty} \frac{1}{T}\int_0^T p(t,x,B)dt = \rho(B)\,.
\end{equation}
then that the SPHS is \textit{ultimately stochastic stable}.
\end{definition}

We will assume throughout the paper that there exists a feedback control law $u(t) = u(X(t))$ so that the SPHS admits a global solution. Also, without loss of generality we will assume $x_e = 0$. The next proposition states that if a SPHS is \textit{strictly ultimately stochastic passive}, then it is \textit{ultimately stochastic stable}.

\begin{proposition}\label{PRO:Conv}
Consider the SPHS \eqref{EQN:SPHS} and assume that:
\begin{description}[style=unboxed,leftmargin=0cm]
\item[(i)] it is \textit{strictly ultimately stochastic passive};
\item[(ii)] the noise is non--degenerate, that is the matrix $\Sigma(x) := \sigma(x) \sigma^T(x) \succ 0$ is positive definite.
\end{description}

Then, the SPHS \eqref{EQN:SPHS} admits a unique invariant measure and is \textit{ultimately stochastic stable}.
\end{proposition}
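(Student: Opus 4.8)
The plan is to combine the two ingredients supplied by the hypotheses---the Feller property together with a Lyapunov-type drift estimate from strict ultimate stochastic passivity, and the non-degeneracy of the noise---to run a standard ergodicity argument in three stages: existence of an invariant measure, uniqueness, and convergence in the two senses of Definition \ref{DEF:WConv}.

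First I would establish that the SPHS under the feedback law $u(t)=u(X(t))$ is a Feller Markov process. Since a global solution is assumed and the coefficients $(J-R)\partial_x H$, $g$, $\sigma$ (together with the feedback) are regular enough, continuous dependence on the initial datum gives the Feller property of $P_t f(x)=\media f(X^x(t))$ for $f\in C_b(\RR^n)$; moreover, by hypothesis (ii) the generator $\LL$ is uniformly elliptic, so by classical results (e.g. \cite{Dap,Lor}) the semigroup is in fact strongly Feller and $t$-regular. Next, I would use strict ultimate stochastic passivity to produce the tightness needed for existence. Taking $V=H$ as a Lyapunov function and using the feedback $u=u(X)$, the energy balance \eqref{EQN:EnPres} together with Dynkin's formula gives
\[
\media H(X^x(t)) = H(x) + \media\int_0^t \LL H(X^x(s))\,ds,
\]
and Definition \ref{DEF:USP} yields $\LL H(x)\le y^T u - \delta_C\|x\|^2$ for $\|x\|\ge C$, while on the compact ball $\{\|x\|\le C\}$ the quantity $\LL H$ is bounded; hence $\LL H(x)\le -\delta_C\|x\|^2 + K\mathbbm{1}_{\{\|x\|\le C\}}(x)$ for a constant $K$ (absorbing the port term $y^Tu$ into the drift via the feedback, as is implicit in the standing assumption on $u$). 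This negative-outside-a-ball drift bound, integrated in time and averaged, shows $\frac1T\int_0^T \media\,[\,\|X^x(s)\|^2\wedge M\,]\,ds$ stays bounded, hence the occupation measures are tight and $\limsup_{T\to\infty}\frac1T\int_0^T P_t\mathbbm{1}_B(x)\,dt>0$ for a sufficiently large ball $B$. Proposition \ref{PRO:Las} then delivers a stationary distribution $\rho$.

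For uniqueness I would invoke the Doob–Khasminskii theorem: a strongly Feller, $t$-regular (equivalently, irreducible) Markov semigroup admits at most one invariant measure; non-degeneracy of $\Sigma$ from hypothesis (ii) gives precisely the irreducibility and strong Feller property required, so $\rho$ is unique. Finally, for the convergence statements \eqref{EQN:PassCond1b} and \eqref{EQN:PassCond}: uniqueness of $\rho$ together with the Lyapunov drift condition upgrades the weak-* convergence of Cesàro averages of $p(t,x,\cdot)$ to $\rho$, giving \eqref{EQN:PassCond} by the Krylov–Bogoliubov / Birkhoff argument; the pointwise limit \eqref{EQN:PassCond1b} then follows from strong Feller plus irreducibility (Doob's theorem), which forces $p(t,x,\cdot)\to\rho$ in total variation, a fortiori setwise on Borel sets. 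This makes the SPHS \textit{ultimately stochastic stable}.

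The main obstacle I anticipate is the transition from the passivity inequality, which involves the uncontrolled port term $y^Tu$, to a genuine Lyapunov drift condition of the form $\LL H\le -c\|x\|^2+K\mathbbm{1}_{\{\|x\|\le C\}}$: one must check that the closed-loop drift (with $u=u(X)$ substituted) still satisfies the outside-a-ball negativity, i.e. that the feedback does not re-inject energy at infinity, and that $H$ is coercive enough to make the sublevel sets compact. The counterexample in the Remark following Definition \ref{DEF:USP} shows this is delicate: it is essential that the bound be $\LL H\le -\epsilon<0$ (strictness), not merely $\LL H\le 0$, which is exactly why hypothesis (i) requires \textit{strict} ultimate stochastic passivity. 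Once that drift condition is in hand, the rest is a routine application of the Has'minskii/Doob machinery.
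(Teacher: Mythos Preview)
Your proposal is correct and covers the same ground as the paper, but the architecture differs in a way worth noting. The paper orders the three stages as existence $\to$ convergence $\to$ uniqueness: after extracting an invariant measure via Proposition~\ref{PRO:Las} exactly as you do, it uses a hitting-time estimate ($\mathbb{E}[t\wedge\tau]\le H(x)/\epsilon$ for the first entrance time $\tau$ into the ball $\{\|x\|\le C\}$) to verify Khasminskii's recurrence condition and then invokes \cite[Theorems~4.2 and~4.3]{Kha} directly to obtain both limits in Definition~\ref{DEF:WConv}; uniqueness is then read off \emph{a posteriori} from the Ces\`aro limit \eqref{EQN:PassCond}, by integrating the definition of any other invariant measure against that limit. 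You instead run existence $\to$ uniqueness $\to$ convergence, using non-degeneracy to get strong Feller plus irreducibility and then Doob--Khasminskii for uniqueness, followed by Doob's theorem for total-variation (hence setwise) convergence.

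Both routes are standard and both lean on hypothesis~(ii) at the decisive point. Your path is arguably cleaner conceptually---Doob's theorem packages the strong Feller/irreducibility input into a single statement---whereas the paper's route stays closer to the classical Khasminskii machinery and gets uniqueness essentially for free from the ergodic-average limit, without ever naming Doob. Your careful flagging of the passage from the passivity inequality (with the port term $y^Tu$) to a genuine drift bound $\mathcal{L}H\le -\epsilon$ outside a ball is well taken: the paper simply asserts this as equation~\eqref{EQN:Est1} without further comment, implicitly relying on the standing feedback assumption and $x_e=0$, so you are being more scrupulous there than the original.
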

\begin{proof}
For the sake of readability we will divide the proof in several steps: in particular, in \textit{step 1} we will prove existence of an invariant measure, in \textit{step 2} we will prove the long time convergence of the transition density towards one of the invariant measures and at \textit{step 3} we will show uniqueness of the invariant measure.

\begin{description}[style=unboxed,leftmargin=0cm]
\item[(Step 1 - existence)] Under above assumptions the SPHS admits an invariant measure. In fact, since the SPHS is \textit{strictly ultimately stochastic passive}, there exists $\epsilon > 0$ such that, for $\| x \| \geq C$, $C > 0$, it holds
\begin{equation}\label{EQN:Est1}
\mathcal{L}H(x) < - \epsilon <0 \,.
\end{equation}

Then, using It\^{o} formula we have that
\begin{equation}\label{EQN:Est2}
\mathbb{E}H(X(t)) \leq H(x) + \int_0^t \mathbb{E} \mathcal{L}H (X(s)) ds\,.
\end{equation}

Using equations \eqref{EQN:Est1}--\eqref{EQN:Est2} we obtain
\begin{align}\label{EQN:Est3}
\mathbb{E} \mathcal{L}H (X(s)) &\leq C_m \mathbb{P}\left (\left .|X(s)| \leq C \right | X(0) = x\right ) - \epsilon \mathbb{P}\left (\left .|X(s)| > C \right | X(0) = x\right ) = \nonumber\\
&= - \epsilon + (C_m + \epsilon)\mathbb{P}\left (\left .|X(s)| \leq C \right | X(0) = x\right )\,,
\end{align}
being $C_m$ the maximum value of $\mathcal{L}H$ over the set $\|x\| \leq C$.

Using the fact that $H(x) \geq 0$, it follows from equation \eqref{EQN:Est2}
\[
- \frac{1}{t} \mathbb{E} H(X(0)) \leq \frac{1}{t}\int_0^t \mathbb{E} \mathcal{L}H (X(s)) ds\,,
\]
so that using estimate \eqref{EQN:Est3} we obtain
\begin{equation}\label{EQN:EstEInv}
- \frac{1}{t} \mathbb{E} H(X(0)) + \epsilon < (C_m+\epsilon) \frac{1}{t}\int_0^t \mathbb{P}\left ( |X(s)| \leq C \right ) ds\,.
\end{equation}

The existence of the invariant measure thus follows using equation \eqref{EQN:EstEInv} together with Proposition \ref{PRO:Las}.

\item[(Step 2 - convergence)] Let $\tau$ denote the first time the SPHS $X$ reaches the sphere $\|x\| \leq C$ and by $t \wedge \tau := \min\{t,\tau\}$; then, It\^{o} formula yields
\[
\mathbb{E}H(X(t \wedge \tau)) = H(x) + \int_0^{t \wedge \tau}\mathbb{E} \mathcal{L}H (X(s)) ds \leq H(x) -\epsilon \mathbb{E}[t \wedge \tau]\,.
\]

The fact that $H$ is non-negative implies
\[
\mathbb{E}[t \wedge \tau] \leq \frac{H(x)}{\epsilon}\,.
\]
In particular, assumption (B.2) in \cite[Assumption B, Chapter 4]{Kha} holds true. Therefore, using \cite[Theorem 4.2]{Kha} we obtain that, for a function $F$ integrable with respect to an invariant measure $\rho$, it holds
\begin{equation}\label{EQN:Conv1}
\mathbb{P}\left (\frac{1}{T}\int_0^T F(X(t))dt \to \int_{\RR^n} F(y)\rho(dy)\right )=1\,,\quad \mbox{as} \quad T \to \infty\,.
\end{equation}

If the function $F$ is bounded, equation \eqref{EQN:Conv1} implies using Lebesgue dominated convergence thereon,
\begin{equation}\label{EQN:Conv2}
\lim_{T \to \infty} \frac{1}{T}\int_0^T \mathbb{E} F(X(t))dt = \int_{\RR^n} F(x)\rho(dx)\,,
\end{equation}
which in turn implies, for $B \in \mathcal{B}(\RR^n)$, that
\begin{equation}\label{EQN:Conv3}
\lim_{T \to \infty} \frac{1}{T}\int_0^T p(t,x,B) dt = \rho(B)\,.
\end{equation}

At last, \cite[Theorem 4.3]{Kha} yields that
\begin{equation}\label{EQN:Conv4}
\lim_{t \to \infty} p(t,x,B) = \rho(B)\,.
\end{equation}

We have thus proven that the PSHS \eqref{EQN:SPHS} is \textit{ultimately stochastic stable}.

\item[(Step 3 - uniqueness)] To prove uniqueness of the invariant measure $\rho$, denote by $\rho_1$ another invariant measure. In particular, by the definition of invariant measure it holds
\begin{equation}\label{EQN:InvSec}
\int_{\RR^n}p(t,x,B) \rho_1(dx) = \rho_1(B)\,.
\end{equation}
Integrating equation \eqref{EQN:InvSec} in $[0,T]$ we obtain from equation \eqref{EQN:Conv3} that $\rho(B) = \rho_1(B)$ from which we infer the uniqueness of the invariant measure $\rho$. 
\end{description}
\end{proof}

Proposition \ref{PRO:Conv} establish a key result concerning the convergence of a \textit{strictly ultimately stochastic passive} towards the unique invariant measure. Such result is based, besides ultimately passivity of the SPHS, also on an assumption of non--degeneracy of the noise. Although such assumption can be considered to be fairly general in certain contexts, there is a general class of processes of particular interest that fails to satisfy the above non-degeneracy assumption. In particular, stochastic oscillator equations of the form
\begin{equation}\label{EQN:SOsc}
\begin{cases}
dq(t) &= f_q(p,q) dt\,,\\
dp(t) &= f_p(p,q)dt + \sigma(p,q)dW(t)\,,
\end{cases}
\end{equation}
do not satisfy non-degeneracy assumed in Proposition \ref{PRO:Conv}. Given the relevance of this class of systems, we will provide an alternative version of Proposition \ref{PRO:Conv} dropping the non-degeneracy assumption on the noise.

\begin{proposition}\label{PRO:ConvDeg}
Consider the SPHS \eqref{EQN:SPHS} and assume that:
\begin{description}
\item[(i)] it is \textit{strictly ultimately stochastic passive};
\item[(ii)] the transition kernel $p(t,x,B)$ is equivalent to the Lebesgue measure for any $t>0$ and $x \in \RR^n$. 
\end{description}
Then, the SPHS \eqref{EQN:SPHS} admits a unique invariant measure, which is absolutely continuous with respect to the Lebesgue measure, and it is \textit{ultimately stochastic stable}.
\end{proposition}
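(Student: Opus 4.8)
The plan is to repeat, \emph{mutatis mutandis}, the three--step argument used for Proposition~\ref{PRO:Conv}, noticing that non--degeneracy of $\Sigma$ was used there only to guarantee irreducibility of the diffusion, a role that is now played by hypothesis (ii). Indeed, \textit{Step 1 (existence)} goes through unchanged: strict ultimate stochastic passivity provides $\epsilon>0$ and $C>0$ with $\mathcal{L}H(x)<-\epsilon$ whenever $\|x\|\geq C$; Itô's formula together with $H\geq 0$ gives, as in \eqref{EQN:EstEInv},
\[
-\tfrac{1}{t}\,\mathbb{E}H(X(0))+\epsilon<(C_m+\epsilon)\,\tfrac{1}{t}\int_0^t\mathbb{P}\!\left(|X(s)|\leq C\right)ds\,;
\]
since the feedback law is assumed to make the SPHS a globally well--posed, hence Feller, diffusion, Proposition~\ref{PRO:Las} applied with the compact set $\{\|x\|\leq C\}$ yields an invariant measure $\rho$. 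No use of $\Sigma\succ0$ is made.

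For \textit{uniqueness and absolute continuity} I would argue directly from (ii). Since every $p(t,x,\cdot)$ with $t>0$ is equivalent to the Lebesgue measure, all these kernels are mutually equivalent, i.e.\ the transition semigroup is $t$--regular in the sense recalled above; by the classical uniqueness criterion for regular Markov semigroups (Doob's theorem, see \cite[Ch.~4]{Kha} and \cite{Dap}) a $t$--regular Feller semigroup possesses at most one invariant measure, so $\rho$ is unique. Moreover, if $\mathrm{Leb}(B)=0$ then $p(t,x,B)=0$ for every $x$, whence $\rho(B)=\int_{\RR^n}p(t,x,B)\,\rho(dx)=0$, so $\rho\ll\mathrm{Leb}$.

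For the \textit{convergence statements} \eqref{EQN:PassCond1b}--\eqref{EQN:PassCond} I would reuse the stopping--time estimate: letting $\tau$ be the first time $X$ enters $\{\|x\|\leq C\}$, Itô's formula on $t\wedge\tau$ and $H\geq0$ give $\mathbb{E}[t\wedge\tau]\leq H(x)/\epsilon$, hence $\mathbb{E}\tau\leq H(x)/\epsilon<\infty$, so the process is positively recurrent to a compact set; this is condition (B.2) of \cite[Assumption~B, Ch.~4]{Kha}, while the regularity condition (B.1) is exactly what (ii) supplies. Therefore \cite[Thm.~4.2, Thm.~4.3]{Kha} apply and deliver both the Cesàro limit $\frac1T\int_0^T p(t,x,B)\,dt\to\rho(B)$ and the pointwise limit $p(t,x,B)\to\rho(B)$, i.e.\ the SPHS is \textit{ultimately stochastic stable} in the sense of Definition~\ref{DEF:WConv}. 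Uniqueness could alternatively be re--derived a posteriori, exactly as in Step~3 of Proposition~\ref{PRO:Conv}, by integrating $\int p(t,x,B)\,\rho_1(dx)=\rho_1(B)$ and passing to the Cesàro limit.

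The main obstacle is the bookkeeping behind citing \cite{Kha}: its ergodic theorems are usually stated for non--degenerate diffusions, where (B.1) is automatic, so one must check carefully that the only structural ingredients those proofs need are (a) the Lyapunov recurrence bound $\mathbb{E}[t\wedge\tau]\leq H(x)/\epsilon$ and (b) mutual equivalence of the transition kernels, and that hypothesis (ii) furnishes (b). A cleaner, self--contained alternative is to obtain uniqueness and total--variation convergence straight from Doob's theorem for $t$--regular semigroups (\cite{Dap}), combined with the tightness of $\{p(t,x,\cdot)\}_{t\geq0}$ already produced in Step~1, and then read off \eqref{EQN:PassCond1b}--\eqref{EQN:PassCond} as corollaries.
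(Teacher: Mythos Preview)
Your argument is correct and follows the same three--step scheme as the paper. The paper's proof is more compressed: Step~1 is declared identical to that of Proposition~\ref{PRO:Conv}; in Step~2 the absolute continuity of $\rho$ is obtained via Fubini exactly as you do, and the ergodic limits \eqref{EQN:Conv1}--\eqref{EQN:Conv4} are then drawn from \cite[Theorem~3]{Zak} rather than from \cite{Kha}; Step~3 recovers uniqueness a~posteriori from the Ces\`aro limit, precisely the alternative you mention at the end. Your worry that the theorems in \cite{Kha} are formulated for non--degenerate diffusions is exactly why the paper changes reference at this point; your Doob--theorem route is a legitimate substitute and has the advantage of delivering uniqueness and total--variation convergence in one stroke, whereas the paper's route stays closer to the machinery already set up in Proposition~\ref{PRO:Conv}.
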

\begin{proof}
As in the proof of Proposition \ref{PRO:Conv} we will divide the current proof into three steps. Also, for the sake of brevity steps that follow from the same arguments as in Proposition \ref{PRO:Conv} will be skipped.

\begin{description}[style=unboxed,leftmargin=0cm]
\item[(Step 1 - existence)] Same arguments as in Step 1 of the proof of Proposition \ref{PRO:Conv} yield existence of an invariant measure.

\item[(Step 2 - convergence)] Since $p(t,x,A)$ is absolutely continuous with respect to the Lebesgue measure, denoting with a slight abuse of notation again by $p$ its density, we have that
\[
\int_{\RR^n} \int_B p(t,x,y)dy \rho(dx) = \rho(B)\,,
\]
so that Fubini theorem yields that also $\rho$ is absolutely continuous with respect to the Lebesgue measure. In the following we will denote again by $\rho$ the density of the invariant measure $\rho$. Thus, following \cite[Theorem 3]{Zak} we can infer equations \eqref{EQN:Conv1}--\eqref{EQN:Conv2}--\eqref{EQN:Conv3}--\eqref{EQN:Conv4}, so that the PSHS \eqref{EQN:SPHS} is \textit{ultimately stochastic stable}.

\item[(Step 3 - uniqueness)] Uniqueness of the invariant measure follows from equation \eqref{EQN:Conv3} as in the proof of Proposition \ref{PRO:Conv}.
\end{description}
\end{proof}

Therefore, to consider stochastic oscillator alike equation \eqref{EQN:SOsc}, the transition kernel of the driving process must be studied. The next example shows how Proposition \ref{PRO:ConvDeg} can be applied to a simple and yet relevant class of stochastic oscillator systems.

\begin{example}
Consider the system with additive noise
\begin{equation}\label{EQN:SOscAdd}
\begin{cases}
dq(t) &= p(t) dt\,,\\
dp(t) &= f_p(p,q)dt + \sigma dW(t)\,,
\end{cases}
\end{equation}
with $\sigma>0$. Since the transition probability kernels are equivalent under a change of probability measure, we can apply \textit{Girsanov theorem}, \cite{Kar}, and introduce $\tilde{W}$, a Brownian motion under the probability measure $\tilde{\mathbb{P}}$ equivalent to the original probability measure $\mathbb{P}$, defined as
\[
\tilde{W}(t) := W(t) + \sigma^{-1} \int_0^t f_p(p,q) ds \,.
\]  

Equation \eqref{EQN:SOscAdd} can be thus rewritten as
\begin{equation}\label{EQN:SOscAddEq}
\begin{cases}
dq(t) &= p(t) dt\,,\\
dp(t) &= \sigma d\tilde{W}(t)\,,
\end{cases}
\end{equation}
or in compact form as
\begin{equation}\label{EQN:OscComp}
dX(t)= AX(t)dt + \Sigma d\tilde{W}(t)\,,\quad X(t)=\left (q(t),p(t)\right )^T\,,
\end{equation}
for some suitable constant matrices $A$ and $\Sigma$. An integration with respect to time shows that the transition probability kernel is Gaussian distributed with covariance matrix given by
\begin{equation}\label{EQN:CovGauss}
\int_0^t e^{A(t-s)} \Sigma \Sigma^T e^{A^T(t-s)}ds\,.
\end{equation}

To show that $p$ is equivalent to the Lebesgue measure we must show that equation \eqref{EQN:CovGauss} is positive definite. This is equivalent to the requirement that the pair $(A,\Sigma)$ is controllable , that is the matrix $[\Sigma, A \Sigma,\dots,A^{n-1}\Sigma]$ span $\RR^n$. Therefore, a direct calculation implies that the transition probability kernel associated to \eqref{EQN:SOscAdd} is equivalent to the Lebesgue measure and therefore Proposition \ref{PRO:ConvDeg} applies.

This example can be generalized to other systems and the equivalence of the probability kernel can be checked via analogous arguments studying the controllability of the pair $(A,\Sigma)$.

\demo
\end{example}

\subsection{Energy balance of SPHS}\label{SEC:EB}

Consider the SPHS \eqref{EQN:SPHS}, then by mean of It\^{o}-formula, we have the following mean energy balance equation
\begin{equation}\label{EQN:EB}
\begin{split}
\mathbb{E}H(X(t)) - H(x) &= \mathbb{E}\int_0^t u^T(s) y(s) ds + \mathbb{E}\int_0^t \mathcal{L}H(X(s))ds = \\
&= \mathbb{E}\int_0^t u^T(s) y(s) ds - d(t),,
\end{split}
\end{equation}
where $d$ represents the dissipation of the system. The objective is thus to find a suitable control law $u = \phi(x) + \kappa$ such that the controlled SPHS has Hamiltonian function $H_d$ with minimum in a desired configuration $x_e$.

Consider the SPHS of the form
\begin{equation}\label{EQN:SPHSOr}
\begin{cases}
d X(t) = \left [(J-R) \partial_x H(X) + g(X(t)) u(t)\right ]dt + \sigma(X(t)) dW(t)\,,\\
y(t) = g^T(X(t)) \partial_x H(X(t))\,. 
\end{cases}
\end{equation}

We want to design a feedback control $u(t) = \phi(X(t))$ so that the controlled port--Hamiltonian system
\begin{equation}\label{EQN:SPHSTrans}
\begin{cases}
d X(t) = [J_d-R_d] \partial_x H_d(X(t)) dt + \sigma(X(t)) dW(t)\,,\\
y(t) = g^T(X(t)) \partial_x H_d(X(t))\,,
\end{cases}
\end{equation}
maintains the port--Hamiltonian structure.

The next result is the stochastic counterpart of the energy shaping result proved in \cite{OrtS} in the deterministic setting.

\begin{proposition}\label{PRO:ShapPC}
Given the SPHS \eqref{EQN:SPHSOr} and assume that it is possible to find $\phi$, $J_a$, $R_a$ and $K$ such that
\begin{equation}\label{EQN:Trans}
\left (\left [J+J_a\right ] - \left [R+R_a\right ]\right )K(X(t)) = g(X(t)) \phi(X(t)) - \left [J_a-R_a\right ]\partial_x H(X(t))\,,
\end{equation}
and such that the following conditions hold
\begin{description}[style=unboxed,leftmargin=0cm]
\item[(i) structure preservation:]
\begin{align}\label{EQN:SPb}
J_d &:= J+J_a = -\left (J+J_a\right )^T\,,\\
R_d &:= R+R_a = \left (R+R_a\right )^T \succeq 0\,;
\end{align}
\item[(ii) integrability:] it holds that
\begin{equation}\label{EQN:Int}
\partial_x K(X(t))=\partial^T_x K(X(t))\,;
\end{equation}
\item[(iii) equilibrium assignment] for a given $x_e \in \RR^n$, it holds
\begin{equation}\label{EQN:EqAss}
K(x_e) = - \partial_x H(x_e)\,;
\end{equation}
\item[(iv) stability] for $x_e \in \RR^n$, it holds
\begin{equation}\label{EQN:LStab}
\partial_x K(x_e) > - \partial_x^2 H(x_e)\,;
\end{equation}
\item[(v) ultimately stochastic passivity] for all $x$ such that $\|x-x_e\|>C$,
\begin{equation}\label{EQN:Lyap}
\mathcal{L}H_d(x) \leq - \epsilon <0 \,;
\end{equation}
\item[(vi) uniqueness of the invariant measure] either one of the following holds true:
\begin{description}[style=unboxed,leftmargin=0cm]
\item[(vi a) non-degeneracy] the noise is non--degenerate, that is the matrix $\Sigma(x) := \sigma(x) \sigma^T(x) \succ 0$ is positive definite;
\item[(vi b) equivalence] the transition kernel $p(t,x,B)$ is equivalent to the Lebesgue measure for any $t>0$ and $x \in \RR^n$.
\end{description}
\end{description}

Then the closed--loop system \eqref{EQN:SPHSOr} with control feedback law $u(t) = \phi(X(t))$ is a SPHS of the form
\begin{equation}\label{EQN:SPHSTransP}
\begin{cases}
d X(t) = [J_d-R_d] \partial_x H_d(X) dt + \sigma(X(t)) dW(t)\,,\\
y(t) = g^T(X(t)) \partial_x H(X(t))\,. 
\end{cases}
\end{equation}
with $H_d:=H+H_a$, $\partial_x H_a = K$ and there exists a unique invariant measure $\rho$ under which \eqref{EQN:SPHSTransP} is \textit{ultimately stochastic stable}.
\end{proposition}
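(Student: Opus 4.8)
The plan is to split the argument into four stages: (a) show that the feedback $u=\phi(X)$ turns \eqref{EQN:SPHSOr} into a system of the form \eqref{EQN:SPHSTransP}; (b) check that $H_d=H+H_a$ is a genuine Hamiltonian with a strict local minimum at $x_e$; (c) read off strict ultimately stochastic passivity of the closed loop; (d) invoke Proposition \ref{PRO:Conv} or Proposition \ref{PRO:ConvDeg} to get the unique invariant measure and ultimate stochastic stability.

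For stage (a) I would substitute $u(t)=\phi(X(t))$ into the drift of \eqref{EQN:SPHSOr}, obtaining $(J-R)\partial_x H(X)+g(X)\phi(X)$, and set $J_d:=J+J_a$, $R_d:=R+R_a$. Expanding $[J_d-R_d]\partial_x H_d(X)=[J_d-R_d]\bigl(\partial_x H(X)+K(X)\bigr)$ and inserting the matching relation \eqref{EQN:Trans} for the summand $[J_d-R_d]K(X)$, the $[J_a-R_a]\partial_x H(X)$ terms cancel and one is left with exactly $(J-R)\partial_x H(X)+g(X)\phi(X)$. Hence the closed-loop drift equals $[J_d-R_d]\partial_x H_d(X)$, while the diffusion $\sigma(X)dW$ is untouched; structure preservation (i) guarantees $J_d=-J_d^T$ and $R_d=R_d^T\succeq 0$, so \eqref{EQN:SPHSOr} with $u=\phi(X)$ is indeed of the form \eqref{EQN:SPHSTransP}. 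This algebra is the core computation but is routine.

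For stage (b), the integrability condition (ii), namely $\partial_x K=\partial_x^T K$, together with the Poincar\'e lemma on $\RR^n$, yields a scalar potential $H_a$ with $\partial_x H_a=K$; the free additive constant is fixed so that $H_d:=H+H_a$ is nonnegative (and proper), which is the one point where one must assume, or verify case by case, that $H_d$ is bounded below and radially unbounded, as tacitly required by the Lyapunov estimates underlying Propositions \ref{PRO:Conv}--\ref{PRO:ConvDeg}, which were stated for $H\geq 0$. Then equilibrium assignment (iii) gives $\partial_x H_d(x_e)=\partial_x H(x_e)+K(x_e)=0$, and stability (iv) gives $\partial_x^2 H_d(x_e)=\partial_x^2 H(x_e)+\partial_x K(x_e)>0$, so $x_e$ is a strict local minimum of $H_d$, which is the energy-shaping conclusion.

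For stage (c), applying \eqref{EQN:IngGSPHS} to the closed-loop system \eqref{EQN:SPHSTransP} the skew part drops and $\mathcal{L}H_d(x)=-\partial_x^T H_d(x)R_d\partial_x H_d(x)+\tfrac12\Tr[\sigma(x)\sigma^T(x)\partial_x^2 H_d(x)]$, so condition (v), $\mathcal{L}H_d(x)\leq-\epsilon<0$ for $\|x-x_e\|>C$, is precisely the strict ultimately stochastic passivity hypothesis needed by Propositions \ref{PRO:Conv} and \ref{PRO:ConvDeg} (as noted in the remark after Definition \ref{DEF:USP}, it is strict negativity, not mere non-positivity, that is required). Finally, for stage (d): under (vi a) Proposition \ref{PRO:Conv} applied to \eqref{EQN:SPHSTransP} produces a unique invariant measure $\rho$ under which the system is ultimately stochastic stable; under (vi b) the same conclusion follows from Proposition \ref{PRO:ConvDeg}, with $\rho$ absolutely continuous with respect to Lebesgue measure. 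The hard part is not the chain of implications but pinning down that $H_d$ recovered from $K$ is a legitimate, nonnegative, proper Lyapunov function so that the cited propositions apply verbatim; one should also note that the output in \eqref{EQN:SPHSTransP} is written with $\partial_x H$ rather than $\partial_x H_d$, which is immaterial for the invariant-measure claim since only the drift and diffusion enter.
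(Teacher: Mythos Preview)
Your proposal is correct and follows essentially the same approach as the paper's proof, which consists of only two sentences: the matching relation \eqref{EQN:Trans} yields the closed-loop form \eqref{EQN:SPHSTransP} with $H_d=H+H_a$, and then Propositions \ref{PRO:Conv}--\ref{PRO:ConvDeg} give existence, uniqueness and ultimate stochastic stability. You have simply unpacked these two steps in detail (including the explicit algebra for stage (a) and the role of conditions (ii)--(iv)), and your caveat about needing $H_d\geq 0$ and proper for the Lyapunov arguments is a valid point that the paper leaves implicit.
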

\begin{proof}
Equations \eqref{EQN:Trans} ensures that its solution $K$ is such that the closed--loop SPHS is of the form \eqref{EQN:SPHSTransP} with total energy $H_d=H+H_a$.

Using Propositions \ref{PRO:Conv}--\ref{PRO:ConvDeg} it follows that the invariant measure is unique and the process is \textit{ultimately stochastic stable}.
\end{proof}

\begin{remark}\label{REM:Rem}
Several comments on Proposition \ref{PRO:ShapPC} are in order:
\begin{enumerate}[style=unboxed,leftmargin=0cm]
\item as regard condition $(v)$ on ultimately stochastic passivity in Proposition \ref{PRO:ShapPC}, it is worth noticing that the following holds true
\begin{equation}\label{EQN:CondSP}
\mathcal{L}H_d(x) = \mathcal{L}_0 H(x) + \mathcal{L}_a H_a(x) + \partial_x^T H_d(x)\left [J_d - R_d\right ]\partial_x H_d(x)\,,
\end{equation}
where $\mathcal{L}_0$ is the infinitesimal generator of the autonomous SPHS \eqref{EQN:SPHSOr} with null control $u\equiv 0$ and $\mathcal{L}_a$ is the infinitesimal generator of the autonomous SPHS \eqref{EQN:SPHSOr} with structure matrices $J_a$ and $R_a$. Using conditions \eqref{EQN:SPb}, equation \eqref{EQN:CondSP} reduces to
\begin{equation}\label{EQN:CondSP2}
\mathcal{L}H_d(x) \leq \mathcal{L}_0 H(x) + \mathcal{L}_a H_a(x) \,,
\end{equation}
so that if the original SPHS and the SPHS with structure matrices $J_a$ and $R_a$ are ultimately stochastic passives, then ultimately stochastic passivity holds true because
\[
\mathcal{L}H_d(x) \leq \mathcal{L}_0 H(x) + \mathcal{L}_a H_a(x) < -\epsilon\,.
\]

\item in \cite{Had} an alternative energy shaping was proposed. As already mentioned in the introduction, there is one key difference, with a fundamental implication, between Proposition \ref{PRO:ShapPC} and \cite[Theorem 4]{Had}. In \cite[Theorem 4]{Had} it is required that the noise vanishes at the equilibrium, that is $\sigma(x_e)=0$. Such condition has two relevant consequences: (i) an additive noise cannot be considered, and (ii) the Hamiltonian can be shaped only around the points in which the noise vanishes. Such conditions limit the range of application of \cite[Theorem 4]{Had}. Therefore, our proposed setting aims at filling the gap in which results in \cite{Had} cannot be applied. It is worth stressing nonetheless that, in the case of a vanishing noise we recover the same results;

\item invariant measures for general stochastic systems are not easy to explicitly derive. In fact, besides \textit{gradient systems}, in which case the invariant measure has an explicit exponential form, a general theory is of difficult derivation and each case must be studied ad hoc. Nonetheless, ergodicity of general stochastic systems is among the most studied properties of stochastic processes, so that a vast literature on this topic exists. In particular, sharp estimates on the support of the invariant measure can be obtained, \cite{Huang,Huang2}. Nonetheless, as already highlighted in \cite{OrtS}, the strengths of the proposed method is that it not necessary to explicitly compute the shaped Hamiltonian, whereas the main objective is to design a feedback-control construction procedure to stabilize a system. As it will be shown later with the aid of a specific example, a similar argument translates in the proposed stochastic setting in the sense that the explicit computation of the invariant measure of the system is not required;

\item the function $\phi$ in equation \eqref{EQN:Trans} can be obtained with analogous techniques as in the deterministic systems with null volatility $\sigma \equiv 0$. In particular, comments made in \cite[3.2]{OrtS} can be used to design the feedback control $\phi$. The only additional condition compared to a classical deterministic PHS is that the resulting SPHS \eqref{EQN:SPHSTransP} is \textit{ultimately stochastic passive}. This is among the main reasons for developing the weak energy shaping theory for SPHS.

More formally, consider a vanishing small noise $\sigma_\varepsilon:=\varepsilon \sigma$, either additive or multiplicative, and denote by $\rho_\varepsilon$ the invariant measure of the SPHS \eqref{EQN:SPHSTransP} with volatility $\sigma_\varepsilon$. Then, under some mild integrability assumptions on the regularity of the coefficients, is can be seen that, \cite{Huang}, $\rho_\varepsilon \to \rho$, $\varepsilon \to 0$, in the weak$^*$ topology of probability measure, that is
\[
\int_{\RR^n}f(x) \rho_\varepsilon(dx) \to \int_{\RR^n}f(x) \rho(dx) \,,\quad \mbox{as} \quad \varepsilon \to 0\,,
\]
for any $f \in C_b(\RR^n)$. Also, the limiting measure $\rho$, corresponding to the deterministic PHS \eqref{EQN:SPHSTransP} with null volatility $\sigma \equiv 0$, can be seen to have support in
\[
supp(\rho) \subset S= \left \{x \in \RR^n \,:\,  \partial_x^T H_d(x)  R_d(x) \partial_x H_d(x) = 0 \right \}\,.
\]

Therefore in the case $S=\{x_e\}$, the deterministic PHS is asymptotically stable and $\rho_\varepsilon \to \delta_{x_e}$ as $\varepsilon \to 0$, being $\delta_{x_e}$ the Dirac measure concentrated in $x_e$, \cite{Huang}. This clarifies the structure of Proposition \ref{PRO:ShapPC} in the sense that the shaped SPHS possesses an invariant measure which is shaped around the equilibrium that is typically specified in the deterministic context. In this sense, since in general the noise is an external disturbance that cannot be removed, the propose weak energy shaping aims at stabilizing a system around a desired configuration given a certain environmental noise that cannot be removed or compensated. 
\end{enumerate}
\demo
\end{remark}

Proposition \ref{PRO:ShapPC} clarifies the main idea behind the proposed setting. In general, a stochastic system is a dynamic system subject to external random perturbations. Since in the proposed setting the noise cannot be affected by the control law, the main idea is to shape the dynamic system around the equilibrium desired for the deterministic system. This shaping is done considering ergodic properties of the stochastic systems. In particular, to a smaller noise corresponds to an invariant measure concentrated around the equilibrium of the deterministic PHS. A further relevant point opened by the current research is the opposite point of view. That is, given a deterministic PHS, a controller can inject a suitable noise into the system such that the resulting system enjoys better stability properties, such as faster convergence to the equilibrium, \cite{Arn}. For instance, this can be achieved via a suitable multiplicative noise of increasing magnitude in certain domains. It is worth stressing that results show that, suitably injecting a random perturbation into a stochastic system can stabilize a deterministic dynamic system that would not be stabilizable otherwise. Such stabilization-by-noise is not treated in the current research and will be the topic of future research.

\section{On the connection with infinite dimensional deterministic Port--Hamiltonian systems}\label{SEC:ConnInf}

The present Section aims at showing a suggestive connection between SPHS and infinite--dimensional deterministic PHS. As introduced in previous Sections, the weak energy shaping approach is based on the invariant measure of an SPHS. In order to study the invariant measure of a SDE, a common and powerful approach is to study the \textit{Fokker--Planck} equation associated to the SDE. We will show that the related \textit{Fokker-Planck} equation is an infinite--dimensional PHS on a suitable functional space, so that in turn the problem of weak energy shaping of stochastic PHS can be associated to the problem of energy shaping of an infinite--dimensional controlled PHS.

Consider the SPHS
\begin{equation}\label{EQN:SPHSID2}
\begin{cases}
dX(t) = \left [(J-R)\partial_x H(X(t)) + g(X(t)) u(t)\right ]dt + \sigma(X(t)) dW(t)\,,\\
y(t)=g^T(X(t)) \partial H(X(t))\,.
\end{cases}
\end{equation}

As introduced in Section \ref{SEC:SPSDE}, the transition semigroup $P_t$ defined as
\[
P_t f(x) := \mathbb{E} f(X^x(t))\,,
\] 
for any $f \in C_b(\RR^n)$. 

Under certain regularity condition, \cite{Lor}, the function $v(t,x)$ defined through \textit{Markov semigroup} in equation \eqref{EQN:MarkovSp} as
\[
v(t,x) := P_t f(x)\,,
\]
is the solution of the Cauchy problem
\begin{equation}\label{EQN:CauchyID2}
\begin{cases}
\partial_t v(t,x) - \mathcal{L}v(t,x) = 0\,,\quad (t,x) \in \RR_+ \times \RR^n\,,\\
v(0,x) = f(x)\,,
\end{cases}
\end{equation}
where $\mathcal{L}$ is the infinitesimal generator of the Markov process defined as
\begin{align}\label{EQN:InfGSPHS2}
\mathcal{L}f(x) &= \sum_{i=1}^n \mu_i(x) \partial_{x_i}f(x) + \frac{1}{2}\sum_{i,j=1}^n \Sigma_{ij}(x)\partial^2_{x_i \, x_j}f(x)\,,
\end{align}
where we set for short
\[
\mu:=  \left [(J-R)\partial_x H(X(t)) + g(X(t)) u(t)\right ]\,,
\]
and $\Sigma(x) := \sigma^T(x)\sigma(x)$. The formal adjoint $\mathcal{L}^*$ in distributional sense, \cite{Kry}, of the \textit{infinitesimal generator} $\mathcal{L}$ is given by
\begin{align}\label{EQN:InfGSPHS2Adj}
\mathcal{L}^*f(x) &= -\sum_{i=1}^n  \partial_{x_i}\left (\mu_i(x)f(x)\right ) + \frac{1}{2}\sum_{i,j=1}^n \partial^2_{x_i \, x_j}\left (\Sigma_{ij}(x)f(x)\right )\,.
\end{align}

In the following we will denote by $L^p_\rho := L^p(\RR^n,\rho)$, $p\in [1,\infty)$, the space of $p-$integrable functions with respect to the measure $\rho$; $\| \cdot \|_p$ is the standard norm in the space $L^p_\rho$.

Definition \ref{DEF:Inv} can be restated in terms of the Markov semigroup $P_t$ as
\begin{equation}\label{EQN:InvMarkovP22}
\int_{\RR^n} P_t f(x)\rho(dx) = \int_{\RR^n} f(x) \rho(dx)\,,\quad f \in B_b(\RR^n)\,.
\end{equation}

A key aspect is that, if $P_t$ admits a unique invariant measure according to equation \eqref{EQN:InvMarkovP22}, then the
semigroup can be extended to a semigroup of bounded operators in the space $L^p$, \cite[Chapter 9]{Lor}. If no confusion is possible, typically such extension is still denoted by $P_t$. Among the most relevant aspects of the extended semigroup is that it is possible to study into details the long--time behaviour of the semigroup. In particular, it can be proved that the function $P_t f$ converges to 
\[
\bar{f}_\rho := \int_{\RR^n} f(x) \rho(dx) \quad \mbox{in} \quad L^p \quad \mbox{as} \quad t \to \infty\,.
\]
Therefore, if the semigroup $P_t$ is the solution of an infinite--dimensional PHS, as we will show in a while, the problem of weak energy shaping for the SPHS \eqref{EQN:SPHSID2} can be associated to the energy shaping of the associated infinite--dimensional PHS.

We will not enter into details regarding neither the properties of the Markov semigroup $P_t$ or its extensions to a semigroup on $L^p_\rho$ since it is a topic extensive treated in literature. We refer the reader to \cite{Lor} for further details.

Assume that Proposition \ref{PRO:ShapPC} holds and that $\sigma$ is non--degenerate. Therefore, the SPHS \eqref{EQN:SPHSID2} can be rewritten as
\begin{equation}\label{EQN:SPHSTransPID2}
\begin{cases}
d X(t) = [J_d-R_d] \partial_x H_d(X) dt + \sigma(X(t)) dW(t)\,,\\
y(t) = g^T(X(t)) \partial_x H_d(X(t))\,. 
\end{cases}
\end{equation}

In particular, the autonomous SPHS \eqref{EQN:SPHSTransPID2} admits a unique invariant measure $\rho$. Therefore, following \cite{Lor}, we can show that the Markov semigroup $P_t$ associated to the autonomous SPHS \eqref{EQN:SPHSTransPID2} extends to a strongly continuous semigroup on the space $L^p_\rho$, $p \in [1,\infty)$.

In the following, we assume that the unique invariant measure admits a density and it is invertible. We will consider the Hilbert space setting so that we set $p=2$. We recall that, being $L^2_\rho$ a Hilbert space, it can be endowed with a natural inner product defined as
\[
\langle f,g \rangle_\rho := \int f(x)g(x)\rho(x)dx\,.
\]

In such a case we have the following.

\begin{proposition}\label{PRO:PHSDec}
The \textit{infinitesimal generator} $\mathcal{L}$ in equation \eqref{EQN:InfGSPHS2} can be decomposed into a symmetric and anti--symmetric operator, i.e.
\begin{equation}\label{EQN:DecL}
\mathcal{L} = \mathcal{L}_{as} + \mathcal{L}_s\,,
\end{equation}
with $\mathcal{L}_{as}$, resp. $\mathcal{L}_s$, an anti--symmetric, resp. symmetric, operator on $L^2(\RR^n,\rho(x)dx)$.
\end{proposition}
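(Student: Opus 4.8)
The plan is to exploit the fact that the invariant measure $\rho$ provides exactly the weight needed to make the generator $\mathcal{L}$ symmetric with respect to the inner product $\langle \cdot,\cdot\rangle_\rho$. First I would recall the classical fact (essentially the definition of infinitesimal invariance, already quoted in the excerpt) that $\int_{\RR^n}\mathcal{L}f(x)\,\rho(dx)=0$ for all $f$ in the domain of $\mathcal{L}$; more generally, integration by parts against $\rho$ identifies the adjoint of $\mathcal{L}$ on $L^2_\rho$. Concretely, since $\rho$ solves the stationary Fokker--Planck equation $\mathcal{L}^*\rho=0$ with $\mathcal{L}^*$ as in \eqref{EQN:InfGSPHS2Adj}, one can write, for smooth compactly supported $f,g$,
\[
\langle \mathcal{L}f,g\rangle_\rho = \int_{\RR^n}(\mathcal{L}f)(x)\,g(x)\,\rho(x)\,dx,
\]
and integrate by parts to move derivatives off $f$ and onto $g\rho$. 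The drift part contributes a term that is manifestly anti--symmetric up to a zeroth-order correction coming from $\partial_x(\mu\rho)$, while the second-order part contributes a symmetric Dirichlet-type form plus lower-order terms; the stationarity relation $\mathcal{L}^*\rho=0$ is precisely what makes the leftover lower-order terms cancel, so that the adjoint $\mathcal{L}^\dagger$ of $\mathcal{L}$ on $L^2_\rho$ can be computed explicitly.

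The decomposition then follows by the standard polarization: define
\[
\mathcal{L}_s := \tfrac12\bigl(\mathcal{L}+\mathcal{L}^\dagger\bigr),\qquad
\mathcal{L}_{as} := \tfrac12\bigl(\mathcal{L}-\mathcal{L}^\dagger\bigr),
\]
where $\mathcal{L}^\dagger$ is the $L^2_\rho$-adjoint. By construction $\mathcal{L}_s$ is symmetric and $\mathcal{L}_{as}$ is anti--symmetric on $L^2_\rho$, and $\mathcal{L}=\mathcal{L}_s+\mathcal{L}_{as}$. What remains is to identify $\mathcal{L}_s$ and $\mathcal{L}_{as}$ concretely: I expect $\mathcal{L}_s f = \tfrac1{2\rho}\sum_{i,j}\partial_{x_i}\bigl(\rho\,\Sigma_{ij}\,\partial_{x_j}f\bigr)$, the symmetric (self-adjoint, negative semidefinite) diffusion part in divergence form with respect to $\rho$, and $\mathcal{L}_{as}f = b\cdot\partial_x f$ for the divergence-free (with respect to $\rho$) component $b$ of the drift, i.e. $b:=\mu-\tfrac1{2\rho}\nabla\!\cdot(\rho\,\Sigma)$, which satisfies $\nabla\!\cdot(\rho\, b)=0$ exactly because $\rho$ is stationary. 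That last identity is what certifies anti--symmetry of the first-order operator: $\langle b\cdot\partial_x f,g\rangle_\rho = -\langle f, b\cdot\partial_x g\rangle_\rho$ after integrating by parts and using $\nabla\!\cdot(\rho b)=0$.

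The main obstacle is not algebraic but functional-analytic: one must be careful about domains, since $\mathcal{L}$ and its adjoint are unbounded operators on $L^2_\rho$, so the assertion "$\mathcal{L}=\mathcal{L}_s+\mathcal{L}_{as}$" needs a common core (e.g. $C_c^\infty(\RR^n)$, or a suitable Sobolev-type space weighted by $\rho$) on which all three operators are densely defined and the identity holds, together with the standing assumptions already invoked in the excerpt that $\rho$ admits a density which is (locally) bounded away from zero so that $L^2_\rho$ and the weighted derivatives are well behaved. The non--degeneracy of $\sigma$ assumed just before the statement is what guarantees the diffusion part is a genuine (strictly elliptic) symmetric form and that $\rho$ is smooth and positive by elliptic regularity, so the divisions by $\rho$ above are legitimate. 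I would therefore present the computation at the level of the core $C_c^\infty(\RR^n)$, exhibit $\mathcal{L}_s$ and $\mathcal{L}_{as}$ in the explicit forms above, verify symmetry and anti--symmetry by integration by parts using $\mathcal{L}^*\rho=0$, and remark that the decomposition extends to the generator of the $L^2_\rho$-semigroup by closure, deferring finer domain questions to the cited references on Markov semigroups in weighted $L^p$ spaces.
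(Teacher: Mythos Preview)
Your approach is essentially the same as the paper's: the paper computes the $L^2_\rho$-adjoint explicitly as $\mathcal{L}^\dagger f=\rho^{-1}\mathcal{L}^*(\rho f)$ via integration by parts and then takes the standard polarization $\mathcal{L}_s=\tfrac12(\mathcal{L}+\mathcal{L}^\dagger)$, $\mathcal{L}_{as}=\tfrac12(\mathcal{L}-\mathcal{L}^\dagger)$, checking symmetry and anti--symmetry directly. Your explicit identification of $\mathcal{L}_s$ as the $\rho$-weighted divergence-form diffusion and of $\mathcal{L}_{as}$ as drift along the $\rho$-divergence-free vector field, together with your attention to domains, goes beyond the paper's bare argument (which never invokes $\mathcal{L}^*\rho=0$ for the decomposition itself), but the core idea is identical.
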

\begin{proof}
Notice first that, integrating by parts, the following holds true,
\begin{align}\label{EQN:SymmMa}
\begin{split}
&\langle g,\mathcal{L}f \rangle_\rho =\\
&= \sum_{i=1}^n \int g(x) \mu_i(x) \partial_{x_i}f(x) \rho(x)dx + \frac{1}{2}\sum_{i,j=1}^n \int g(x) \Sigma_{ij}(x)\partial^2_{x_i \, x_j}f(x) = \\
&= -\sum_{i=1}^n \int \partial_{x_i} \left (g(x) \mu_i(x) \rho(x)\right ) f(x)dx + \frac{1}{2}\sum_{i,j=1}^n \int \partial^2_{x_i \, x_j}\left (g(x) \Sigma_{ij}(x)\rho(x)\right )f(x) dx=\\
&=\langle \rho^{-1} \mathcal{L}^*(\rho g),f \rangle_\rho\,.
\end{split}
\end{align}

We can thus define

\begin{equation}\label{EQN:AtS}
\begin{cases}
\mathcal{L}_{s}f &:= \frac{1}{2} \left (\mathcal{L}f + \rho^{-1} \mathcal{L}^*(\rho f)\right )\,,\\
\mathcal{L}_{as}f &:= \frac{1}{2} \left (\mathcal{L}f - \rho^{-1} \mathcal{L}^*(\rho f)\right )\,.
\end{cases}
\end{equation}

It is immediate to see that
\[
\mathcal{L} = \mathcal{L}_{as} + \mathcal{L}_s\,.
\]

It further holds that, using equation \eqref{EQN:SymmMa},
\begin{align*}
\langle g,\mathcal{L}_s f \rangle_\rho &= \frac{1}{2} \langle g,\mathcal{L}f \rangle_\rho+ \frac{1}{2} \langle g,\rho^{-1} \mathcal{L}^*(\rho f) \rangle_\rho = \\
&=\frac{1}{2} \langle \rho^{-1} \mathcal{L}^*(\rho g),f \rangle_\rho+ \frac{1}{2} \langle \mathcal{L}g,f \rangle_\rho =\langle \mathcal{L}_s g, f \rangle_\rho\,,
\end{align*}
and
\begin{align*}
\langle g,\mathcal{L}_{as} f \rangle_\rho &= \frac{1}{2} \langle g,\mathcal{L}f \rangle_\rho - \frac{1}{2} \langle g,\rho^{-1} \mathcal{L}^*(\rho f) \rangle_\rho = \\
&=\frac{1}{2} \langle \rho^{-1} \mathcal{L}^*(\rho g),f \rangle_\rho - \frac{1}{2} \langle \mathcal{L}g,f \rangle_\rho =-\langle \mathcal{L}_{as} g, f \rangle_\rho\,,
\end{align*}
so that the operator $\mathcal{L}_{as}$, resp. $\mathcal{L}_{s}$, is anti--symmetric, resp. symmetric.
\end{proof}

Therefore, we can establish the following formulation of the Fokker-Planck equation \eqref{EQN:CauchyID2} in term of a Hilbert--space valued deterministic PHS, \cite{LeG,Jac}.

\begin{proposition}
The Cauchy problem in equation \eqref{EQN:CauchyID2} defines an infinite--dimensional PHS on the Hilbert space $L^2_\rho$ with linear Hamiltonian. In particular, the probability density is conserved.
\end{proposition}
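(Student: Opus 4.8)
The plan is to read off the port--Hamiltonian data directly from the splitting $\mathcal{L} = \mathcal{L}_{as} + \mathcal{L}_s$ furnished by Proposition \ref{PRO:PHSDec}. I would take $L^2_\rho$ as the state space, the skew--adjoint part $\mathcal{J} := \mathcal{L}_{as}$ as the interconnection (Dirac) operator, the symmetric part $\mathcal{R} := -\mathcal{L}_s$ as the resistive operator, and the quadratic storage $\mathcal{H}(v) := \tfrac12\langle v,v\rangle_\rho$, whose $L^2_\rho$--gradient $\partial_v\mathcal{H}(v)=v$ is linear --- this is the sense in which the Hamiltonian (density) operator is linear and the resulting infinite--dimensional system is a linear PHS. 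With these identifications \eqref{EQN:CauchyID2} becomes the PHS evolution $\partial_t v = (\mathcal{J}-\mathcal{R})\,\partial_v\mathcal{H}(v) = \mathcal{L}v$ with no external port. The first thing to verify is $\mathcal{R}\succeq 0$: since $\mathcal{L}_{as}$ is skew--adjoint one has $\langle v,\mathcal{L}v\rangle_\rho = \langle v,\mathcal{L}_s v\rangle_\rho$, and because $\rho$ is invariant the semigroup $P_t$ extends to a $C_0$ contraction semigroup on $L^2_\rho$ (the extension is \cite[Ch.~9]{Lor}; contractivity follows from Jensen's inequality applied to the Markov operator $P_t$ together with invariance of $\rho$). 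Hence $t\mapsto\|P_tv\|_\rho$ is non--increasing, so $\langle v,\mathcal{L}_s v\rangle_\rho = \tfrac12\tfrac{d}{dt}\big|_{t=0}\|P_tv\|_\rho^2 \le 0$; this is the infinite--dimensional analogue of $R_d\succeq0$, and it yields the power balance of a lossy PHS, $\tfrac{d}{dt}\mathcal{H}(v(t)) = \langle v,\mathcal{L}_s v\rangle_\rho = -\langle v,\mathcal{R}v\rangle_\rho \le 0$.

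Next I would prove the conservation statement. The natural candidate is the linear functional $\mathcal{C}(v) := \langle \mathbf{1},v\rangle_\rho = \int_{\RR^n} v(x)\,\rho(dx)$, where $\mathbf{1}$ is the constant function equal to $1$. Applying the adjoint identity \eqref{EQN:SymmMa} with $g=\mathbf{1}$ gives $\langle \mathbf{1},\mathcal{L}v\rangle_\rho = \langle \rho^{-1}\mathcal{L}^*(\rho\,\mathbf{1}),v\rangle_\rho = \langle \rho^{-1}\mathcal{L}^*\rho,v\rangle_\rho = 0$, because invariance of $\rho$ is precisely the stationary Fokker--Planck equation $\mathcal{L}^*\rho = 0$ (equivalently $\mathbf{1}\in\ker\mathcal{L}_{as}\cap\ker\mathcal{L}_s$, using in addition $\mathcal{L}\mathbf{1}=0$), so $\mathcal{C}$ is a Casimir of the structure. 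Therefore $\mathcal{C}(v(t))$ is constant along solutions of \eqref{EQN:CauchyID2}; in particular, starting from a nonnegative $v(0,\cdot)=f$ normalized by $\int f\,\rho = 1$, the positivity of the Markov semigroup gives $v(t,\cdot)=P_tf\ge0$ and $\int v(t,\cdot)\rho = 1$ for all $t$, which is the asserted conservation of the probability density. Finally I would package all of this into the implicit/Dirac--structure formulation of a distributed PHS on a Hilbert space \cite{LeG,Jac}: the flow--effort pair $(f,e):=(-\partial_t v,\ v)$, the graph of $\mathcal{L}_{as}$ as the (lossless) Dirac structure, and $\mathcal{R}$ as the resistive relation, so that \eqref{EQN:CauchyID2} is an infinite--dimensional PHS on $L^2_\rho$ with conserved linear Hamiltonian.

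The algebra above is short; the genuine difficulty --- and the main obstacle --- is the functional--analytic bookkeeping. One must treat $\mathcal{L},\mathcal{L}^*,\mathcal{L}_{as},\mathcal{L}_s$ as unbounded operators on $L^2_\rho$ with compatible dense domains, justify that the integrations by parts underlying \eqref{EQN:SymmMa} produce no boundary contribution at infinity --- for which the standing assumption that $\rho$ admits a smooth, strictly positive, invertible density is essential --- and verify that $\mathcal{L}_{as}$ is genuinely skew--adjoint (not merely skew--symmetric) on its domain, so that it generates a unitary group and the PHS is well posed in the sense of \cite{Jac}. The estimate $\mathcal{R}\succeq0$ likewise rests on the $L^2_\rho$--contractivity of the extended semigroup, which in turn presupposes, via Propositions \ref{PRO:Conv}--\ref{PRO:ConvDeg}, that $\rho$ is an honest invariant probability measure; I would therefore state the proposition under the hypotheses of Proposition \ref{PRO:ShapPC} that already guarantee existence and uniqueness of $\rho$.
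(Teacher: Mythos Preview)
Your approach is essentially the same as the paper's: use the splitting $\mathcal{L}=\mathcal{L}_{as}+\mathcal{L}_s$ from Proposition~\ref{PRO:PHSDec} to cast \eqref{EQN:CauchyID2} as a linear PHS on $L^2_\rho$, and derive conservation of $\int v\,\rho$ from the invariance identity $\int \mathcal{L}f\,\rho=0$ (the paper phrases this via $\tfrac{d}{dt}P_tf=\mathcal{L}P_tf$ and \cite[Prop.~9.1.9, Thm.~1.3.4]{Lor}; you phrase it as $\mathbf{1}$ being a Casimir through $\mathcal{L}^*\rho=0$, which is the same computation). Two points where you go beyond the paper are worth flagging. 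First, you explicitly verify $\mathcal{R}=-\mathcal{L}_s\succeq 0$ via $L^2_\rho$--contractivity of $P_t$; the paper simply asserts the PHS structure without checking dissipativity, so your argument is strictly more complete here. Second, your reading of ``linear Hamiltonian'' as the quadratic storage $\mathcal{H}(v)=\tfrac12\|v\|_\rho^2$ with linear gradient $\partial_v\mathcal{H}=v$ is the correct Jacob--Zwart interpretation of the paper's shorthand ``$\mathcal{H}v=v$'' (the Hamiltonian \emph{operator} is the identity). Your closing caveats about domains, skew--adjointness versus skew--symmetry, and boundary terms at infinity are legitimate and are precisely the points the paper leaves implicit by appealing to \cite{Lor}.
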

\begin{proof}
Using Proposition \ref{PRO:PHSDec} it can be seen that the Cauchy problem \eqref{EQN:CauchyID2} can be written in terms of a symmetric and anti--symmetric operator as
\begin{equation}\label{EQN:CauchyID2SAS}
\begin{cases}
\partial_t v(t,x) - \left (\mathcal{L}_{as} + \mathcal{L}_s\right )v(t,x) = 0\,,\quad (t,x) \in \RR_+ \times \RR^n\,,\\
v(0,x) = f(x)\,.
\end{cases}
\end{equation}

Equation \eqref{EQN:CauchyID2SAS} defines therefore a linear PHS on the space $L^2_\rho$ with linear Hamiltonian $\mathcal{H}v = v$. Using \cite[Proposition 9.1.9]{Lor} we have that the semigroup $P_t$ is conservative on the space $L^2_\rho$, that is
\[
\int_{\RR^n} P_t \mathbf{1}\rho(x)dx = \int_{\RR^n}\mathbf{1}\rho(x)dx=1\,.
\]

From the fact that $\rho$ is an invariant measure we immediately have that
\[
\int_{\RR^n} \mathcal{L}f(x) \rho(x)dx = 0\,.
\]
Using \cite[Theorem 1.3.4]{Lor}, it follows
\[
\frac{d}{dt} P_t f(x)=\mathcal{L} P_t f(x)\,.
\]
which yields, after integration in $\RR^n$, that
\[
\frac{d}{dt} \int_{\RR^n} P_t f(x) \rho(x)dx = \int_{\RR^n} \mathcal{L} P_t f(x) \rho(x)dx = 0\,.
\]

At last, using the fact that
\[
v(t,x) := P_t f(x)\,,
\]
we obtain
\[
\frac{d}{dt} \int_{\RR^n} v(t,x) \rho(x)dx = 0\,,
\]
which states that the probability density function is conserved.
\end{proof}

Finally, we can prove a convergence result.

\begin{proposition}
For any $f \in L^p_\rho$, $p\in [1,\infty)$, we have
\[
\lim_{t \to \infty} P_t f = \bar{f}:= \int_{\RR^n}f(x)\rho(x)dx\quad \mbox{in} \quad L^2_\rho \,.
\]
\end{proposition}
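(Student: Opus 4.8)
The plan is to deduce the convergence statement from the ergodic/long--time theory for Markov semigroups admitting a unique invariant measure, as developed in the references already cited in the excerpt (in particular \cite{Lor}) together with the results established above. The key point is that, by Proposition \ref{PRO:ShapPC} and Propositions \ref{PRO:Conv}--\ref{PRO:ConvDeg}, the autonomous SPHS \eqref{EQN:SPHSTransPID2} possesses a \emph{unique} invariant measure $\rho$, which by hypothesis admits a (strictly positive, invertible) density. First I would recall that, since $\rho$ is invariant, the Markov semigroup $P_t$ extends to a strongly continuous contraction semigroup on $L^p_\rho$ for every $p \in [1,\infty)$, cf. \cite[Chapter 9]{Lor}; this is exactly the extension already invoked in the discussion preceding Proposition \ref{PRO:PHSDec}.

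Next I would exploit the equivalence between invariance and infinitesimal invariance (the Feller property holds here, as established in the proof of Proposition \ref{PRO:Conv}): for every $f$ in the domain of $\mathcal{L}$,
\[
\int_{\RR^n} \mathcal{L} f(x)\,\rho(x)\,dx = 0\,,
\]
so the constant $\bar f = \int_{\RR^n} f(x)\rho(x)\,dx$ is a fixed point of the extended semigroup, $P_t \bar f = \bar f$. The task is then to show that every orbit converges to its projection onto the constants. The cleanest route is to first prove the claim for $p=2$, where the Hilbert space structure and the decomposition $\mathcal{L} = \mathcal{L}_{as} + \mathcal{L}_s$ from Proposition \ref{PRO:PHSDec} are available: the symmetric part $\mathcal{L}_s$ is dissipative, the kernel of $\mathcal{L}$ in $L^2_\rho$ consists exactly of the constants (using irreducibility, which follows from the non--degeneracy, resp.\ the equivalence of the transition kernel to Lebesgue measure assumed in (vi)), and the spectral gap / ergodicity argument of \cite[Chapter 9]{Lor} yields $P_t f \to \bar f$ in $L^2_\rho$. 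Alternatively, one can bypass the gap and argue directly: by the Birkhoff--type convergence \eqref{EQN:Conv3} together with \eqref{EQN:Conv4} one has $p(t,x,B) \to \rho(B)$ pointwise, hence $P_t \mathbbm{1}_B \to \bar{\mathbbm{1}}_B$ $\rho$--a.e., and a density/dominated--convergence argument upgrades this to convergence in $L^2_\rho$ for all $f \in L^2_\rho$. Finally, since $L^2_\rho \subset L^1_\rho$ densely and $\rho$ is finite, and since $\|P_t\|_{L^p_\rho \to L^p_\rho} \leq 1$ for all $p$, an interpolation/approximation argument extends the convergence from $p=2$ (or from bounded $f$) to every $f \in L^p_\rho$, $p \in [1,\infty)$; the $L^2_\rho$ statement in the proposition then follows a fortiori for those $f$ that already lie in $L^2_\rho$.

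The main obstacle I anticipate is establishing that the only $P_t$--invariant functions in $L^2_\rho$ are the constants, i.e.\ that the semigroup is \emph{ergodic} and not merely that $\rho$ is an invariant measure; this is where hypothesis (vi) (non--degeneracy of $\Sigma$, or equivalence of the transition kernel to Lebesgue measure) is essential, since it forces irreducibility of the chain and hence uniqueness of the invariant density, which in turn rules out non--constant fixed points. Once irreducibility and uniqueness are in hand, the passage from a.e.\ convergence (coming from Step 2 of Proposition \ref{PRO:Conv}) to norm convergence in $L^2_\rho$, and then the extension to general $p$, are routine consequences of the contraction property of $P_t$ on each $L^p_\rho$ and the finiteness of $\rho$; I would simply cite the corresponding statements in \cite[Chapter 9]{Lor} rather than reprove them.
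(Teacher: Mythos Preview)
Your proposal is correct and aligns with the paper's own approach: the paper's proof is a single citation, ``It follows from \cite[Theorem 9.1.16]{Lor}'', so both you and the authors ultimately defer to the ergodic theory for Markov semigroups in \cite[Chapter 9]{Lor}. You have simply unpacked the mechanism behind that citation (extension to $L^p_\rho$, uniqueness of the invariant measure forcing the fixed--point set to be the constants, contraction plus density to pass between exponents), which is sound and indeed what Theorem 9.1.16 encapsulates.
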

\begin{proof}
It follows from \cite[Theorem 9.1.16]{Lor}.
\end{proof}

The above result shows a key connection between the weak stochastic energy shaping proposed in the present paper and the infinite dimensional shaping of deterministic Hilbert space valued PHS. The current work addresses the implication that a weak stochastic shaping has on an associated infinite dimensional PHS. Nonetheless in several application the opposite direction is of relevant interest, where deterministic infinite dimensional technique can be used to properly shape a stochastic PHS. Particular interest in this direction is played by the the H-theorem used in physical systems, \cite{Bar}.

\section{Energy shaping for stochastic port--Hamiltonian systems}\label{SEC:EnSh}

The final goal of energy shaping techniques is to change the shape of the Hamiltonian function so that it has a minimum at a desired point $x_e$. 
Under the above described point of view, energy shaping for SPHS assumes a purely probabilistic interpretation. In fact, the final goal will be to properly shape the invariant measure of the stochastic system so that the process will evolve al large time according to a desired invariant law, peaked around the desired equilibrium configuration $x_e$.

Before entering into details let state some results regarding \textit{Casimir function} for SPHS.

\subsection{On Casimir for stochastic port-Hamiltonian systems}\label{SEC:Cas}

In the present section we are to extend the notion of \textit{Casimir} to stochastic PHS. As usual in the stochastic contest, different notions of conserved quantities can be introduced, usually referred to as \textit{strong} or \textit{weak} conserved quantities, see, e.g. \cite{Ort,CDPMSPHS}.

Consider the \textit{stochastic PHS} \eqref{EQN:SPHS}. We will use the following definitions of strong and weak \textit{Casimir} function.

\begin{definition}\label{DEF:Casimir}
\begin{description}[style=unboxed,leftmargin=0cm]
\item[(i)] A function $C: \RR^n \to \RR$ is called a \textit{strong Casimir} for the SPHS \eqref{EQN:SPHS} if
\[
dC(X(t)) = 0\,.
\]

\item[(ii)] A function $C: \RR^n \to \RR$ is called a \textit{weak Casimir} for the SPHS \eqref{EQN:SPHS} if
\[
\mathcal{L} C(X(t)) = 0\,.
\]
\end{description}
\end{definition}

It is worth mentioning that the notion of \textit{strong Casimir} is the straightforward generalization of deterministic Casimir, in the sense that a Casimir is a quantity that is conserved along the trajectory of the system. Nonetheless, as already emerged in \cite{CDPMSPHS}, it is often too strong to require that a quantity is conserved $\mathbb{P}-$a.s. so that in practical applications it is usually preferable to consider a weak conservation. Such choice of weak conserved quantities will have strong implication in the following treatment.

Notice that an application of Dynkin formula yields that, if $C$ is a \textit{weak Casimir}, then it holds
\begin{equation}\label{EQN:DynC}
\mathbb{E}C(X(t)) = C(x) + \mathbb{E}\int_0^t \mathcal{L}C(X(s))ds =  C(x) = c\,.
\end{equation}

We thus have the following.

\begin{theorem}\label{THM:Casimir}
Given the autonomous stochastic PHS \eqref{EQN:SPHS} with constant null control $u \equiv 0$. 
\begin{description}[style=unboxed,leftmargin=0cm]
\item[(i)] If
\begin{equation}\label{EQN:StrongC}
\begin{cases}
\partial^T_x C(x)\left [ J(x) - R(x)\right ]=0\,,\\
\frac{1}{2} Tr\left [\sigma^T(x) \partial^2_x C(x) \sigma(x)\right ]=0\,,\\
\partial^T_x C(x) \sigma(x)=0\,,
\end{cases}
\end{equation}
then $C$ is a \textit{strong Casimir} for the SPHS \eqref{EQN:SPHS}.

\item[(ii)] If
\begin{equation}\label{EQN:WeakC}
\begin{cases}
\partial^T_x C(x)\left [ J(x) - R(x)\right ]=0\,,\\
\frac{1}{2} Tr\left [\sigma^T(x) \partial^2_x C(x) \sigma(x)\right ]=0\,,
\end{cases}
\end{equation}
then $C$ is a \textit{weak Casimir} for the SPHS \eqref{EQN:SPHS}.
\end{description}
\end{theorem}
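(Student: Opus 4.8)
The plan is a direct computation. I would first write out the infinitesimal generator \eqref{EQN:InfG} of the autonomous SPHS \eqref{EQN:SPHS} with $u\equiv 0$, whose drift is $\mu(x)=[J(x)-R(x)]\partial_x H(x)$ and whose diffusion coefficient is $\sigma(x)$, and then apply It\^{o}'s formula to $t\mapsto C(X(t))$. It is convenient to treat part (ii) first, since part (i) follows from it together with one extra condition.

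For part (ii): by \eqref{EQN:InfG},
\[
\mathcal{L}C(x)=\partial_x^T C(x)\,[J(x)-R(x)]\,\partial_x H(x)+\tfrac12\Tr\!\left[\sigma(x)\sigma^T(x)\,\partial_x^2 C(x)\right].
\]
I would group the first summand as $\big(\partial_x^T C(x)[J(x)-R(x)]\big)\partial_x H(x)$, which vanishes identically by the first equation of \eqref{EQN:WeakC}; and rewrite the second summand, using cyclicity of the trace, as $\tfrac12\Tr[\sigma^T(x)\,\partial_x^2 C(x)\,\sigma(x)]$, which vanishes by the second equation of \eqref{EQN:WeakC}. Hence $\mathcal{L}C\equiv 0$, so $C$ is a weak Casimir by Definition \ref{DEF:Casimir}(ii), and \eqref{EQN:DynC} is then immediate.

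For part (i): It\^{o}'s formula applied to $C(X(t))$ along \eqref{EQN:SPHS} (with $u\equiv 0$, $C\in C^2$, and under the standing regularity assumptions on the coefficients) gives
\[
dC(X(t))=\mathcal{L}C(X(t))\,dt+\partial_x^T C(X(t))\,\sigma(X(t))\,dW(t).
\]
The first two equations of \eqref{EQN:StrongC} are exactly the hypotheses of part (ii), so $\mathcal{L}C\equiv 0$ and the drift term drops; the third equation, $\partial_x^T C(x)\,\sigma(x)=0$, annihilates the stochastic integrand, so the martingale term drops as well. Therefore $dC(X(t))=0$, i.e.\ $C$ is a strong Casimir.

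I do not expect a genuine obstacle: the argument is a one-line generator computation plus one use of It\^{o}'s formula. The only things to be careful about are the matrix bookkeeping — matching the $\sigma\sigma^T$ appearing in \eqref{EQN:InfG} with the $\sigma^T\,\partial_x^2 C\,\sigma$ form in \eqref{EQN:WeakC}--\eqref{EQN:StrongC} via cyclicity of the trace — and noting that the three conditions in \eqref{EQN:StrongC} are genuinely independent (in particular $\partial_x^T C\,\sigma=0$ does not by itself force $\Tr[\sigma^T\partial_x^2 C\,\sigma]=0$), so all three are needed for the strong statement while only the first two are needed for the weak one.
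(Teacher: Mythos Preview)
Your proposal is correct and follows essentially the same direct computation as the paper: apply It\^{o}'s formula to $C(X(t))$, identify the drift with $\mathcal{L}C$, and check that the hypotheses kill each term. The only cosmetic difference is that the paper treats part~(i) first by expanding It\^{o}'s formula in full, whereas you prove (ii) first and then reuse it for (i) via the compact form $dC=\mathcal{L}C\,dt+\partial_x^T C\,\sigma\,dW$; the content is identical.
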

\begin{proof}
\begin{description}[style=unboxed,leftmargin=0cm]
\item[(i)] It\^{o} formula applied to a function $C : \RR^n \to \RR$, yields
\begin{align}
&dC(X(t)) = \partial^T_x C(X(t)) dX(t) + \frac{1}{2} Tr\left [\sigma^T(X(t)) \partial^2 C(X(t)) \sigma(X(t))\right ] dt =\\
&= \left (\partial^T_x C(X(t))\left [ J(x) - R(x)\right ] \partial_x H(X(t)) + \frac{1}{2} Tr\left [\sigma^T(X(t)) \partial^2 C(X(t)) \sigma(X(t))\right ]\right ) dt + \\
&\qquad +\partial^T_x C(X(t)) \sigma(X(t))dW(t)\,.
\end{align}

Using therefore conditions \eqref{EQN:StrongC} it follows that $dC(X(t)) = 0$ and according to Definition \ref{DEF:Casimir} the claim follows.

\item[(ii)] Considering the infinitesimal generator $\mathcal{L}$, taking the expected values and using the martingale property of the It\^{o} integral we obtain
\[
\mathcal{L}C(x) = \partial^T_x C(x)\left [ J(x)-R(x)\right ] \partial_x H(x) + \frac{1}{2} Tr\left [\sigma^T(x) \partial^2_x C(x) \sigma(x) \right ]\,,
\]
so that, using conditions \eqref{EQN:WeakC}, it follows $\mathcal{L}C(X(t))=0$ and according to Definition \ref{DEF:Casimir} we have the claim.
\end{description}
\end{proof}

%
%
%
%

\subsection{Control as interconnection}

The present section is devoted to energy--based control of SPHS; the main goal is to design a feedback interconnection of SPHS such that the closed--loop system is stable in a suitable sense.

A $\RR^{n_C}-$valued controller in SPHS form is given by
\begin{equation}\label{EQN:SPHSCon1}
\begin{cases}
d Z(t) = \left [\left (J_c(Z(t))-R_c(Z(t))\right )\partial_z H_c(z) + g_c(Z(t)) u_c \right ] + \sigma_c(Z(t)) dB(t)\,,\\
y_c(t) = g_c^T(Z(t)) \partial_z H_c(Z(t))\,,
\end{cases}
\end{equation}
where $B$ is a standard Brownian motion independent of $W$.

We interconnect the controller \eqref{EQN:SPHSCon1} to the original SPHS \eqref{EQN:SPHS} through the power preserving interconnection
\[
u = -y_c + v\,,\quad u_c = y + v_c\,.
\]

\begin{figure}[!t]
\centering     
\includegraphics[width=.8\columnwidth]{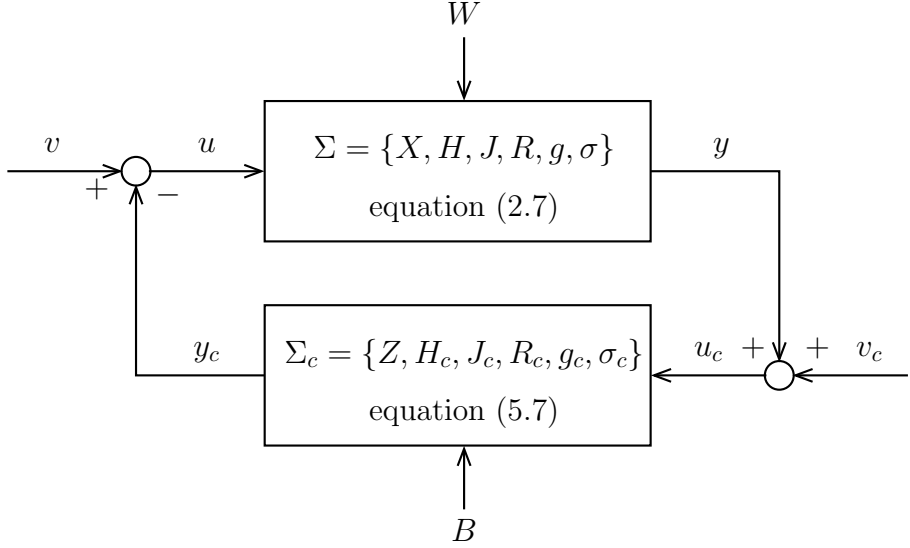}
\caption{Block diagram of the control by interconnection scheme with external ports $(v,y)$ and external noise $(B,W)$.}\label{FIG:TL}
\end{figure}

Therefore it can be seen that, see, e.g. \cite{CDPMSPHS}, the interconnected system in Figure \ref{FIG:TL} is still a SPHS of the form
\begin{equation}\label{EQN:SPHSInt1}
\begin{cases}
d 
\begin{pmatrix}
X(t)\\
Z(t)
\end{pmatrix}
&= \left (
\begin{pmatrix}
J & - g g^T_c\\
g_c g^T & J_c
\end{pmatrix}
-
\begin{pmatrix}
R & 0\\
0 & R_c
\end{pmatrix}
\right ) 
\begin{pmatrix}
\partial_x H(X(t))\\
\partial_z H_c(Z(t))
\end{pmatrix}
dt +\\
&+
\begin{pmatrix}
g & 0\\
0 & g_c
\end{pmatrix}
\begin{pmatrix}
v\\
v_c
\end{pmatrix}
dt + 
\begin{pmatrix}
\sigma(X(t)) dW(t)\\
\sigma_c(Z(t)) dB(t)
\end{pmatrix}\\
\begin{pmatrix}
y(t)\\
y_c(t)
\end{pmatrix}
&= 
\begin{pmatrix}
g^T & 0\\
0 & g_c^T
\end{pmatrix}
\begin{pmatrix}
\partial_x H(X(t))\\
\partial_z H_c(Z(t))
\end{pmatrix}\,.
\end{cases}
\end{equation}

\begin{remark}
We stress that, if not otherwise specified, in the present work we will always consider weak Casimir. The choice is motivated by many reasons. Firstly, as already mentioned above, existence of strong Casimir is often unrealistic and simple examples can be found where no strong Casimir exists. Secondly, when the attention is turned to energy shaping, the choice of strong Casimir poses even greater problems. In \cite{Had} for instance energy shaping via strong Casimir is studied and in \cite[Proposition 5]{Had} sufficient conditions are derived in order to ensure that a given function is a strong Casimir. In particular, the authors must assumes that the controller in SPHS form is perturbed by the same noise as the original SPHS. This is due to the fact that, since a quantity must be conserved along the trajectories, the controller must compensate $\mathbb{P}-$a.s. the noise due to the system. Such condition require further a complete knowledge of the noise in the sense that in a real application the controller must be able to discern the contribution due to the noise from the real state of the system. Such assumptions are difficult to be satisfied in practice.

The choice of a weak Casimir on the contrary overcomes such issues. In fact, since a quantity, as shown in equation \eqref{EQN:DynC}, is required to be conserved in mean value, it is enough to design a controller that matches the SPHS on average. Therefore a filtering of the state of the system can be used to disentangle the contribution of the noise from the state of the system. These considerations will emerge in later results.

\demo
\end{remark}

We thus look for (weak) \textit{Casimir functions} $C(x,z)$ of the form
\begin{equation}\label{EQN:CasCI}
C_i(x,z) = F_i(x) - S_i(z_i)\,,\quad i =1,\dots,n_c\,,
\end{equation}
for some regular enough functions $F_i:\RR^n \to \RR$ and $S_i:\RR^{n_c}\to \RR$.

We thus have the following.

\begin{proposition}\label{PRO:CondCas2}
If, for all $i =1,\dots,n_c$, it holds
\begin{equation}\label{EQN:InfConec}
\begin{split}
&\begin{pmatrix}
\partial^T_x F_i (x)\left (J-R \right ) - \partial^T_z S_i (z) g_c(z)g(x) \\
\partial^T_x F_i(x)g(x)g^T_c(z) -\partial^T_z S_i (z)\left (J_c-R_c\right )
\end{pmatrix}^T
\begin{pmatrix}
\partial_x H(x)\\
\partial_z H_c(z)
\end{pmatrix} +\\
&+ \frac{1}{2} Tr\left [\sigma^T(x) \partial^2_x F_i(x) \sigma(x) - \sigma^T_c(z)\partial^2_z S_i (z)\sigma_c(z)\right ] = 0\,,
\end{split}
\end{equation}
then the functions
\[
C_i(x,z) = F_i(x) - S_i(z_i)\,,\quad i =1,\dots,n_c\,,
\]
are \textit{weak Casimirs} for the closed--loop SPHS \eqref{EQN:SPHSInt1}.
\end{proposition}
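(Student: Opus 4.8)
The plan is to compute directly the action of the infinitesimal generator of the autonomous closed--loop SPHS \eqref{EQN:SPHSInt1} (that is, with external inputs $v \equiv v_c \equiv 0$) on the candidate functions $C_i(x,z) = F_i(x) - S_i(z_i)$, and to recognise that the result is precisely the left--hand side of \eqref{EQN:InfConec}; the hypothesis then forces $\mathcal{L}C_i \equiv 0$, which is exactly the defining property of a weak Casimir in Definition \ref{DEF:Casimir}(ii). The computation runs along the same lines as the proof of Theorem \ref{THM:Casimir}(ii), now carried out for the interconnected system.

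First I would write down the generator of \eqref{EQN:SPHSInt1}. With the joint state $\xi := (x,z)$, the joint structure matrix
\[
\mathcal{J}-\mathcal{R} := \begin{pmatrix} J & -g g_c^T\\ g_c g^T & J_c\end{pmatrix} - \begin{pmatrix} R & 0\\ 0 & R_c\end{pmatrix}\,,
\]
and the total Hamiltonian $\mathcal{H}(\xi) := H(x) + H_c(z)$, and using that $W$ and $B$ are independent --- so that the joint noise contributes no mixed second--order term $\partial_x\partial_z$ --- formula \eqref{EQN:IngGSPHS} gives, for smooth $\Phi$,
\[
\mathcal{L}\Phi(\xi) = \partial_\xi^T \Phi(\xi)\,(\mathcal{J}-\mathcal{R})\,\partial_\xi \mathcal{H}(\xi) + \tfrac12 \mathrm{Tr}\left[\sigma^T(x) \partial_x^2\Phi(\xi)\,\sigma(x)\right] + \tfrac12 \mathrm{Tr}\left[\sigma_c^T(z) \partial_z^2 \Phi(\xi)\,\sigma_c(z)\right]\,.
\]

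Then I would substitute $\Phi = C_i$. Since $C_i$ is a sum of a function of $x$ and a function of $z$, its gradient is $\partial_\xi C_i = \left(\partial_x F_i(x),\ -\partial_z S_i(z)\right)^T$ and its Hessian is block diagonal with diagonal blocks $\partial_x^2 F_i$ and $-\partial_z^2 S_i$; hence the second--order part of $\mathcal{L}C_i$ is exactly $\tfrac12\mathrm{Tr}\left[\sigma^T\partial_x^2 F_i\,\sigma - \sigma_c^T\partial_z^2 S_i\,\sigma_c\right]$, which is the trace term of \eqref{EQN:InfConec}. The only remaining step is the $2\times 2$ block product $\partial_\xi^T C_i\,(\mathcal{J}-\mathcal{R})$: performing the multiplication and collecting the coefficients that multiply $\partial_x H$ and $\partial_z H_c$ reproduces the vector in the first factor of \eqref{EQN:InfConec}. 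Therefore $\mathcal{L}C_i(x,z)$ equals the left--hand side of \eqref{EQN:InfConec}, which vanishes identically on $\RR^n \times \RR^{n_c}$ by hypothesis; so $\mathcal{L}C_i \equiv 0$ and, by Definition \ref{DEF:Casimir}(ii), each $C_i$ is a weak Casimir of \eqref{EQN:SPHSInt1}.

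I do not expect a genuine obstacle here: the proof is a direct computation. The only points that need care are bookkeeping --- keeping the transposes and signs straight in the block multiplication of the structure matrix (since $J$ and $J_c$ are skew--symmetric while $R$ and $R_c$ are symmetric), and invoking the independence of $W$ and $B$ so that no mixed second derivative enters the generator --- together with the harmless remark that one may read $S_i$ either as a function of the single coordinate $z_i$ or of the full vector $z$, which does not affect the computation.
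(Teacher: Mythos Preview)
Your proposal is correct and follows essentially the same route as the paper. The paper applies It\^{o}'s formula to $C_i(X(t),Z(t))$ and then takes expectations to identify the drift with $\mathcal{L}C_i$, while you write down the generator of the interconnected system directly and substitute $\Phi = C_i$; these are the same computation in slightly different dress, and both reduce the claim to recognising that $\mathcal{L}C_i$ coincides with the left--hand side of \eqref{EQN:InfConec}.
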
 
\begin{proof}
An application of It\^{o} formula yields that
\begin{equation}\label{EQN:InfConec2}
\begin{split}
&dC_i(X(t),Z(t)) =\\ 
&=\begin{pmatrix}
\partial^T_x F_i (X(t))\left (J-R \right ) - \partial^T_z S_i (z)g_c(Z(t))g(X(t)) \\
\partial^T_x F_i(X(t))g(X(t))g^T_c(Z(t)) -\partial^T_z S_i (z)\left (J_c-R_c\right )
\end{pmatrix}^T
\begin{pmatrix}
\partial_x H(X(t))\\
\partial_z H_c(Z(t))
\end{pmatrix}dt +\\
&+ \frac{1}{2} Tr\left [\sigma^T(X(t)) \partial^2_x F_i(X(t)) \sigma(X(t)) - \sigma^T_c(z)\partial^2_z S_i (z)\sigma_c(z)\right ] dt+\\
&+ \partial^T_x F_i(X(t))\sigma(X(t))dW(t) + \partial^T_z S_i (z) \sigma_c(Z(t))dB(t) \,.
\end{split}
\end{equation}
Taking the expectation implies that if equation \eqref{EQN:InfConec} holds then $C_i(x,z)$ is a \textit{weak Casimir} according to Definition \ref{DEF:Casimir}.
\end{proof}

As regard Proposition \ref{PRO:CondCas2} we have the following sufficient conditions for the condition \eqref{EQN:InfConec}.

\begin{proposition}\label{PRO:CondCas2SC}
If there exist functions $F_i$ and $S_i$ such that
\begin{align}\label{EQN:CondCasSC}
\begin{cases}
\partial_x^T F_i(x) J(x) \partial_x F_i(x) = \partial^T_z S_i(z)J_c(z)\partial_z S_i(z)\,,\\
R(x) \partial_x F_i(x) = 0\,,\\
\partial_z S_i(z)R_c(z)=0\,,\\
\partial_x^T F_i(x) J(x) = \partial_z S_i(z)g_c(z) g^T(x)\,,\\
Tr\left [\sigma^T(x)\partial_{x}^2 F_i(x) \sigma(x)- \sigma^T_c(z)\partial^2_z S_i (z)\sigma_c(z)\right ]=0\,.
\end{cases}
\end{align}
then equation \eqref{EQN:InfConec} holds and the functions
\[
C_i(x,z) = F_i(x) - S_i(z)\,,\quad i =1,\dots,n_c\,,
\]
is a \textit{weak Casimir} for the closed--loop SPHS \eqref{EQN:SPHSInt1}.
\end{proposition}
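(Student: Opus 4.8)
The plan is to verify that the algebraic conditions \eqref{EQN:CondCasSC} imply the single scalar equation \eqref{EQN:InfConec}, after which Proposition \ref{PRO:CondCas2} delivers the conclusion. The strategy is purely computational: expand the left-hand side of \eqref{EQN:InfConec}, group it into (a) the drift pairing coming from $[J-R]$ and $[J_c-R_c]$ together with the off-diagonal coupling terms $g_c g$ and $g g_c^T$, and (b) the trace term; then cancel everything using \eqref{EQN:CondCasSC} term by term.

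First I would write out the drift pairing in \eqref{EQN:InfConec} explicitly as
\[
\partial_x^T F_i(x)(J-R)\partial_x H(x) - \partial_z^T S_i(z) g_c(z) g(x)\partial_x H(x) + \partial_x^T F_i(x) g(x) g_c^T(z)\partial_z H_c(z) - \partial_z^T S_i(z)(J_c-R_c)\partial_z H_c(z).
\]
The conditions $R(x)\partial_x F_i(x)=0$ and $\partial_z S_i(z) R_c(z)=0$ (using $R=R^T$, $R_c=R_c^T$) kill the two dissipative contributions $\partial_x^T F_i R\,\partial_x H$ and $\partial_z^T S_i R_c\,\partial_z H_c$. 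This leaves the four remaining terms built from $J$, $J_c$, and the coupling blocks. The key observation is that the fourth line of \eqref{EQN:CondCasSC}, namely $\partial_x^T F_i(x) J(x) = \partial_z^T S_i(z) g_c(z) g^T(x)$ — equivalently, by transposition and skew-symmetry of $J$, $g(x) g_c^T(z)\partial_z S_i(z) = -J(x)\partial_x F_i(x)$ — lets me rewrite the coupling terms so that they pair against $\partial_x H$ and $\partial_z H_c$ in a way that telescopes with the $J$ and $J_c$ terms. The first and third lines of \eqref{EQN:CondCasSC} then guarantee the surviving quadratic-type expressions in $J$ and $J_c$ match and cancel. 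In effect, the whole drift pairing collapses to zero once these substitutions are made; this is the step where one must be careful about transposes and the orientation of the coupling blocks $g g_c^T$ versus $g_c g$, and it is the main place an index or sign slip could occur.

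For the second-order part, the trace term in \eqref{EQN:InfConec} is exactly $\tfrac12\Tr[\sigma^T(x)\partial_x^2 F_i(x)\sigma(x) - \sigma_c^T(z)\partial_z^2 S_i(z)\sigma_c(z)]$, which is annihilated verbatim by the fifth condition in \eqref{EQN:CondCasSC}. Combining the vanishing of the drift pairing with the vanishing of the trace term shows that \eqref{EQN:InfConec} holds for every $i=1,\dots,n_c$. Proposition \ref{PRO:CondCas2} then immediately yields that each $C_i(x,z) = F_i(x) - S_i(z)$ is a weak Casimir for the closed--loop SPHS \eqref{EQN:SPHSInt1}, which is the claim. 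The only genuine obstacle is the bookkeeping in the drift cancellation; everything else is a direct appeal to the already-established Proposition \ref{PRO:CondCas2}.
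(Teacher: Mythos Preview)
Your proposal is correct and follows exactly the same route as the paper: the paper's proof is literally the single sentence ``The proof follows straightforward by checking that if conditions in \eqref{EQN:CondCasSC} are valid then \eqref{EQN:InfConec} follows,'' i.e.\ direct algebraic verification followed by an appeal to Proposition \ref{PRO:CondCas2}, which is precisely your plan. Your write-up is in fact more explicit than the paper's, since you indicate how conditions 2--3 kill the dissipative contributions, how condition 4 handles the coupling blocks, and how condition 5 disposes of the trace term; one small slip is your reference to ``the first and third lines'' for the $J$/$J_c$ cancellation --- the third line concerns $R_c$, not $J_c$, so you presumably mean the first and fourth lines (together with skew-symmetry) there.
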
 

\begin{remark}
Sufficient conditions \eqref{EQN:CondCasSC} highlight some key aspects of the proposed approach:
\begin{enumerate}
\item as in the deterministic case, the second condition
\[
R(x) \partial_x F_i(x) = 0\,,
\]
implies that the direction in which dissipation happens cannot be shaped. This limitation is known in literature as \textit{dissipation obstacle}, \cite{Fan,OrtS}. Nonetheless, conditions \eqref{EQN:CondCasSC} are only sufficient so that future research will be devoted to understand if the setting developed can be used to overcome the \textit{dissipation obstacle};

\item the last condition states that the controller's noise  must compensate, on average, the noise of the system. This implies two immediate comments: (i) differently from \cite{Had} it is only necessary that the controller's noise  compensates the noise of the system on average instead that $\mathbb{P}-$a.s.: such condition is easy to be satisfied in practice, and (ii) the controller in equation \eqref{EQN:SPHSCon1} has been designed with a noise term so that it can be chosen to match the contribution due to the system noise.
\end{enumerate} 

\demo
\end{remark}
\begin{proof}
The proof follows straightforward by checking that if conditions in \eqref{EQN:CondCasSC} are valid then \eqref{EQN:InfConec} follows.
\end{proof}

The infinitesimal generator for the closed--loop dynamics of $X$ in equation \eqref{EQN:SPHSInt1} now becomes, for any $\varphi$,
\begin{align}
\mathcal{L} \varphi(x) &= \partial_x^T \varphi(x) \left ( (J(x) -R(x)) \partial_x H(x) - g(x) g^T_c(z) \partial_z H_c(z) \right )\nonumber \\
& + \frac{1}{2} Tr \left [\sigma^T(x) \partial_{x}^2 \varphi(x) \sigma(x) \right ]\,.
\end{align}

We can thus restrict the dynamics on the set
\[
\{(x,z) \,:\, F_i(x) + c_i = S_i(z)\,,\,i=1,\dots,n_C\}\,,
\]
where $c_i:= C_i(x)$ is the constant value assumed by the Casimir $C_i$ according to Dynkin formula \eqref{EQN:DynC}. Thus, since $C$ is a Casimir and using second and third conditions in equation \eqref{EQN:CondCasSC} we have that
\begin{align}\label{EQN:InfG21}
\mathcal{L} \varphi(x) &= \partial_x^T \varphi(x) (J(x) -R(x))\left ( \partial_x H(x) + \partial_z H_C(z)\partial_x F(x) \right ) +\\
&+ \frac{1}{2} Tr \left [\sigma^T(x) \partial_{xx}^2 \varphi(x) \sigma(x) \right ]\,.
\end{align}

Thus, on $z_i = F_i(x) + c_i$, setting 
\[
H_d(x) := H(x) + H_c(c + F(x))\,,
\]
equation \eqref{EQN:InfG21} can be rewritten as
\begin{align}\label{EQN:InfGFin}
\mathcal{L} \varphi(x) &= \partial_x^T \varphi(x) (J(x) -R(x)) \partial_x H_d(x) + \frac{1}{2} Tr \left [\sigma^T(x) \partial_{xx}^2 \varphi(x) \sigma(x) \right ]\,,
\end{align}
so that it is the infinitesimal generator of the stochastic PHS
\begin{equation}\label{EQN:NLSPHS2}
d X(t) = (J(x) -R(x)) \partial_x H_d(X(t))dt + \sigma(X(t)) dW(t)\,.
\end{equation}


\begin{proposition}\label{PRO:ShapPCCasimirES}
Consider the closed--loop SPHS \eqref{EQN:SPHSInt1} and assume that it is possible to find $F$ and $S$ such that conditions \eqref{EQN:CondCasSC} hold. Assume further that $H_d(x) := H(x) + H_c(S^{-1}\left (c + F(x)\right ))$. 

If
\begin{description}[style=unboxed,leftmargin=0cm]
\item[(i) equilibrium assignment:] for a given $x_e \in \RR^n$, it holds
\begin{equation}\label{EQN:EqAssES2}
\partial_x H_c(S^{-1}(c + F(x_e))) = - \partial_x H(x_e)\,;
\end{equation}
\item[(ii) stability:] for $x_e \in \RR^n$, it holds
\begin{equation}\label{EQN:LStabES2}
\partial_x^2 H_c(S^{-1}(c + F(x_e))) > - \partial_x^2 H(x_e)\,;
\end{equation}
\item[(iii) ultimately stochastic passivity:] for all $x$ such that $\|x-x_e\|>C$,
\begin{equation}\label{EQN:LyapES}
\mathcal{L}H_d(x) \leq - \epsilon <0 \,;
\end{equation}
\item[(iv) uniqueness invariant measure:] either
\begin{description}[style=unboxed,leftmargin=0cm]
\item[(iv b) non-degeneracy:] the noise is non--degenerate, that is the matrix $\Sigma(x) := \sigma(x) \sigma^T(x) \succ 0$ is positive definite;
\item[(iv b) equivalence:] the transition kernel $p(t,x,A)$ is equivalent to the Lebesgue measure for any $t>0$ and $x \in \RR^n$.
\end{description}
\end{description}

Then there exists a unique invariant measure $\rho$ under which \eqref{EQN:SPHSInt1} is \textit{ultimately stochastic stable}. 
\end{proposition}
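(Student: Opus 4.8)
The plan is to reduce the joint closed--loop system \eqref{EQN:SPHSInt1} to the reduced SPHS \eqref{EQN:NLSPHS2} obtained by restricting to the Casimir level sets, and then to invoke Propositions \ref{PRO:Conv}--\ref{PRO:ConvDeg}, essentially repeating the argument of Proposition \ref{PRO:ShapPC} with $H_d$ in place of the shaped Hamiltonian there. First I would recall that, by Proposition \ref{PRO:CondCas2SC}, conditions \eqref{EQN:CondCasSC} guarantee that each $C_i(x,z) = F_i(x) - S_i(z_i)$ is a weak Casimir of \eqref{EQN:SPHSInt1}, so that by the Dynkin formula \eqref{EQN:DynC} the quantities $S_i(z_i) - F_i(x)$ are conserved in mean with constant values $-c_i$. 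On the set $\{(x,z) : S_i(z_i) = F_i(x) + c_i,\ i = 1,\dots,n_C\}$ — assuming $S_i$ invertible near the relevant values, so that $z_i = S_i^{-1}(F_i(x) + c_i)$ — the computation \eqref{EQN:InfG21}--\eqref{EQN:InfGFin} already carried out above shows that the $X$--marginal is governed by the infinitesimal generator of the reduced SPHS \eqref{EQN:NLSPHS2} with total Hamiltonian $H_d(x) = H(x) + H_c(S^{-1}(c + F(x)))$.

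Next I would verify that the reduced SPHS \eqref{EQN:NLSPHS2} satisfies the hypotheses of Proposition \ref{PRO:Conv} (resp. Proposition \ref{PRO:ConvDeg}). Hypothesis (iii), $\mathcal{L}H_d(x) \leq -\epsilon < 0$ for all $\|x - x_e\| > C$, is precisely strict ultimate stochastic passivity of the autonomous reduced system in the sense of Definition \ref{DEF:USP} and of the sharpened requirement discussed in the Remark following it. The non--degeneracy alternative in hypothesis (iv) gives the condition $\Sigma = \sigma\sigma^T \succ 0$ needed to apply Proposition \ref{PRO:Conv}, while the equivalence alternative gives the transition--kernel equivalence needed to apply Proposition \ref{PRO:ConvDeg}. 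Applying whichever of the two is in force yields a unique invariant measure $\rho$ for \eqref{EQN:NLSPHS2} under which the process is ultimately stochastic stable; lifting $\rho$ back through the Casimir constraint $z = S^{-1}(c + F(x))$ then produces the asserted invariant measure for \eqref{EQN:SPHSInt1}.

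Finally I would record that conditions (i)--(ii) pin $x_e$ as a strict local minimum of $H_d$: differentiating $H_d(x) = H(x) + H_c(S^{-1}(c + F(x)))$ gives $\partial_x H_d(x_e) = \partial_x H(x_e) + \partial_x H_c(S^{-1}(c + F(x_e))) = 0$ by \eqref{EQN:EqAssES2}, and $\partial_x^2 H_d(x_e) = \partial_x^2 H(x_e) + \partial_x^2 H_c(S^{-1}(c + F(x_e))) \succ 0$ by \eqref{EQN:LStabES2}; this is the local shaping statement, and in the vanishing--noise regime it reduces to the deterministic asymptotic stability of $x_e$, consistently with Remark \ref{REM:Rem}. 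The main obstacle is making the Casimir reduction rigorous: weak Casimirs are conserved only in mean — they are martingales, not pathwise constants, unless the stronger conditions \eqref{EQN:StrongC} hold — so justifying that the dynamics genuinely restricts to the level set $\{S(z) = F(x) + c\}$, or more safely that the invariant measure of the full $(X,Z)$ system is supported on it and disintegrates along it, requires care, as do the compatibility and non--degeneracy of the algebraic--differential system \eqref{EQN:CondCasSC} and the invertibility of $S$ on the relevant range.
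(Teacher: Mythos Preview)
Your proposal is correct and follows exactly the route the paper takes: the paper's own proof is the single line ``The proof follows from above reasoning and proceeds similarly to the proof of Proposition \ref{PRO:ShapPC},'' i.e.\ reduce via the Casimir computation \eqref{EQN:InfG21}--\eqref{EQN:InfGFin} to the SPHS \eqref{EQN:NLSPHS2} and then invoke Propositions \ref{PRO:Conv}--\ref{PRO:ConvDeg} as in Proposition \ref{PRO:ShapPC}. Your final paragraph raising the rigor of the weak--Casimir restriction is a legitimate technical concern, but the paper does not address it either.
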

\begin{proof}
The proof follows from above reasoning and proceeds similarly to the proof of Proposition \ref{PRO:ShapPC}.
\end{proof}

\section{Examples}\label{SEC:Example}

\subsection*{The stochastic inverted pendulum}

Consider an inverted pendulum with additive noise of the form
\begin{equation}\label{EQN:Pend}
\begin{cases}
d X^1(t) &= X^2(t) dt + dW^1(t) \,,\\
dX^2(t) &= \left (g \sin X^1(t) + u(t) \right )dt + dW^2(t)\,,
\end{cases}
\end{equation}
with $g$ denoting the gravitational acceleration and $u$ being the control. Defining
\[
J =
\begin{pmatrix}
0 & 1\\
-1 & 0
\end{pmatrix}\,,\quad R = 0 \,,\quad 
g = 
\begin{pmatrix}
0 & 1 
\end{pmatrix}^T\,,
\]
and using the Hamiltonian function
\begin{equation}\label{EQN:HamEx}
H(x_1,x_2)=\frac{1}{2}x_2^2 + g \cos x_1\,,
\end{equation}
system \eqref{EQN:Pend} can be seen to be a SPHS of the form \eqref{EQN:SPHSOr}.

We wish the system to converge towards a state $\bar{x}_e = (x_{1;e},0)$. Clearly, as the noise does not vanish at the state $\bar{x}_e$, there is no way convergence in probability can be achieved. We therefore consider a weak energy shaping setting, so that we will shape the limiting distribution around the desired state.

With the control law
\[
u = -x_2 - (x_1-x_{1;e}) - g \sin x_1\,,
\]
the system \eqref{EQN:Pend} is transformed into the system
\begin{equation}\label{EQN:ConvPend}
d X(t) =(J_s - R_s) \partial_x H_d(X(t)) dt +  dW(t)\,,\\
\end{equation}
with
\[
J_s =
\begin{pmatrix}
0 & 1\\
-1 & 0
\end{pmatrix}\,,\quad 
R_s = 
\begin{pmatrix}
0 & 0\\
0 & 1
\end{pmatrix}\,,\quad dW(t) = (dW^1(t),dW^2_(t))^T\,,
\]
and shaped Hamiltonian
\begin{equation}\label{EQN:QuadHamE}
H_d(x_1,x_2)=\frac{1}{2}x_2^2 + \frac{1}{2}(x_1 - x_{1;e})^2\,.
\end{equation}

A direct computation shows that
\[
\mathcal{L}H_d(x) = - x_2^2 + 1\,,
\]
so that, for $x_2^2 < 1 + \epsilon$, $\epsilon > 0$, the SPHS \eqref{EQN:ConvPend} is \textit{ultimately passive}. Further, since the Hamiltonian is quadratic, it can be proved that $X$ is a bivariate Gaussian random variable with invariant measure given by
\[
\rho(x) dx = N e^{- \frac{1}{2}\left [x_2^2 + (x_1-x_{1;e})^2\right ]}dx\,.
\]

An interesting consequence of the weak energy shaping theory is that, in order to obtain the desired convergence for this system, we are not forced to choose a control that compensates for the sinusoidal term in the Hamiltonian \eqref{EQN:HamEx}. In fact, choosing 
\[
u = -x_2 - (x_1-x_{1;e})\,,
\]
we obtain a system alike to \eqref{EQN:ConvPend} with Hamiltonian given by
\begin{equation}\label{EQN:HamShape}
H_d(x_1,x_2)=\frac{1}{2}x_2^2 + \frac{1}{2}(x_1 - x_{1;e})^2 + g \cos x_1\,.
\end{equation}

Hamiltonian \eqref{EQN:HamShape} is of the form of equation \eqref{EQN:QuadHamE} plus a potential given by $V(x^1)= g \cos x^1$. Explicit calculations implies that the ergodic invariant measure is
\[
\rho(x) dx = N e^{- \frac{1}{2}\left [x_2^2 + (x_1-x_{1;e})^2 - g (\cos x_1-1)\right ]}dx\,.
\]
Since we aims at shaping a limiting distribution peaked around $x_e$, we require
\[
\left .\partial_{x_1} \rho(x)\right |_{x_1 = x_{1;e}} = -N\left . \rho(x) \left (x_1-\tilde{x}_e + g (\cos x_1-1)\right )\right |_{x_1 = x_{1;e}} = 0\,,
\]
so that by setting
\[
\tilde{x}_e = x_{1;e} - g \sin x_{1;e}\,,
\]
we obtain the desired property.

\subsection*{The stochastic RLC circuit}

Consider the following SPHS
\begin{equation}\label{EQN:SRLC}
\begin{cases}
d X^1(t) &= \left [\alpha \partial_{x^2}H(X^1(t),X^2(t)) +  Eu(t)\right ] dt + \sqrt{2}\sigma^1 dW^1(t)\,,\\
d X^2(t) &= -\left [\alpha \partial_{x^1}H(X^1(t),X^2(t))+ \frac{1}{R_L}\partial_{x^2}H(X^1(t),X^2(t))\right ] dt + \sqrt{2} \sigma^2 dW^2(t)\,,\\
\end{cases}
\end{equation}
with Hamiltonian
\begin{equation}\label{EQN:HRLC}
H(x_1,x_2) = \frac{1}{2 L } x_1^2 + \frac{1}{2C}x_2^2\,,
\end{equation} 
where above, $X^1$ is the inductance flux, $X^2$ is the charge in the capacitor, $\alpha\in [0,1]$ represents the duty
ratio of the PWM, $R_L$ is the output load resistance and $E$ is the DC voltage source. Such system is the example considered in \cite{OrtS} with additive stochastic perturbation. The control objective is to drive the output capacitor voltage to some constant desired value $V_d > E$, maintaining internal stability. The equilibrium point is thus given by
\[
(x_{1;e},x_{2;e}) = \left (\frac{L V_d^2}{R_L E},CV_d\right )\,.
\]

Proceeding as in \cite[Section 7.A]{OrtS}, we can compute the feedback control as
\[
u = \phi(x) = - E\left (\frac{\frac{2}{R_L E} c_1 x_1 + c_2}{c_1 x_2 + c_3}\right )\,,
\]
where $c_1$, $c_2$ and $c_3$ are some constants that need to satisfy
\[
\begin{cases}
&\frac{R^2_L E^2}{4 L V^3_d} c_3 < c_1 < \frac{1}{C V_d}c_3\,,\quad c_3 < 0\,,\\
&-\frac{1}{C V_d}c_3 < c_1 <  \frac{R^2_L E^2}{4 L V^3_d} c_3 \,,\quad c_3 > 0\,,\\
&c_2 = - \left (\frac{2 L V_d^2 }{R^2_L V_d^2} + C\right )c_1 - \frac{1}{V_d}c_3\,.
\end{cases}
\]

The resulting shaped Hamiltonian is thus given by
\[
H_d(x_1,x_2) = \frac{1}{2 L }x_1^2 + \frac{1}{2C}x_2^2 + \frac{1}{2c_1}\frac{(c_1 x_2 + c_3)^2}{(\frac{2}{R_L E}c_1 x_1 + c_2)} - \frac{L V_d^4}{2 R^2_L E^2} + \frac{V_d c_3}{2 c_1}\,.
\]

To apply Proposition \ref{PRO:ShapPC} we must at last check that the shaped SPHS with Hamiltonian $H_d$ is ultimately stochastic passive. Computing the infinitesimal generator we thus have
\begin{equation}
\begin{split}
\mathcal{L}H_d(x_1,x_2) &= -\frac{1}{R_L}\left (\frac{1}{C}x_2 + \frac{c_1 x_2 + c_3}{\frac{2}{R_L E} c_1 x_1 + c_2}\right )^2 +\\
&+\left (\frac{1}{L} + \frac{4 c_1(c_1 x_2 +c_3)}{R^2_L E^2\left (\frac{2}{R_L E}c_1 x_1 + c_2\right )^3}\right )(\sigma^1)^2 +\\
&+ \left (\frac{1}{C} + \frac{c_1}{\frac{2}{R_L E}c_1 x_1 + c_2}\right )(\sigma^2)^2\,.
\end{split}
\end{equation}

Since we have that
\[
\mathcal{L}H_d (x_{1;e},x_{2;e}) =(\sigma^1)^2 \left ( \frac{1}{L} - \frac{4 c_1 V^3_d}{E^2 R_L^2 (c_1 C V_d + c_3)^2}\right ) + (\sigma^1)^2 \left (\frac{c_3}{c_3 C + c_1 C^2 V_d}\right )<\infty\,,
\]
using the fact
\[
\lim_{(x_1,x_2) \to \pm \infty}\mathcal{L}H_d(x_1,x_2) = - \infty\,,
\]
we can infer that there exist $C> 0$ and $\delta_C > 0$ such that for $\| x-(x_{1;e},x_{2;e}) \| \geq C$ it holds
\[
\mathcal{L}H_d(x_1,x_2) \leq - \epsilon < 0 \,.
\]

We can thus conclude that the shaped SPHS is ultimately stochastic passive and thus Proposition \ref{PRO:ShapPC} holds.

It is worth stressing that, given the non-linear potential appearing in the third term of the shaped Hamiltonian $H_d$, it is not possible to compute explicitly the invariant measure for the shaped SPHS. Nonetheless, Proposition \ref{PRO:ShapPC} still apply and we can conclude the existence and uniqueness of the invariant measure of the shaped SPHS. Similarly to what happens in the deterministic case, the strength of Proposition \ref{PRO:ShapPC} is that there is no need to explicitly compute the invariant measure to infer the convergence of the shaped SPHS. Also Remark \ref{REM:Rem} implies that the invariant measure of the shaped SPHS is peaked around the equilibrium we would obtain in the deterministic case with null noise.

\section{Conclusions}

The present paper continues the investigation of stochastic PHS started in \cite{CDPM_Tank,CDPM_IFAC,CDPMSPHS,CDPM_Dis,CDPM_Bil}, addressing the problem of energy shaping of SPHS. Such topic has been one of the main interest in the study of deterministic PHS and consequently it has been object of deep investigation also in the stochastic case. Nonetheless, existing results lack of a suitable generality to include relevant examples. In particular, stochastic systems with additive noise are often ruled out from the possible applications of energy shaping due to a non--vanishing noise. In the present paper we therefore generalizes the energy shaping techniques for SPHS. In particular, we introduce a weak notion of passivity, related to the ergodicity and the connected invariant measure for the SPHS. Such definition naturally leads to a weak notion of convergence in terms of transition semigroup. Compared to existing approaches to energy shaping, the one proposed in the current work has a purely probabilistic flavour, where the main objects are the invariant measure of the system and the related transition probabilities. At last, reformulating the problem of stochastic weak energy shaping in terms of the associated Fokker-Planck equation, energy shaping of deterministic infinite--dimensional port--controlled PHS is recovered, highlighting an insightful connection between the stochastic and deterministic energy shaping techniques.

\cleardoublepage

\cleardoublepage
\bibliographystyle{apalike}
\bibliography{bib}
\end{document}